\newcommand{\Q}{\mathbb{Q}}
\newcommand{\Z}{\mathbb{Z}}
\newcommand{\F}{\mathbb{F}} % finite field
\newcommand{\M}{\mathfrak{M}} % set of places
\renewcommand{\H}{\mathcal{H}}
\newcommand{\E}{{\rm E}} % Ext groups
\newcommand{\D}{\mathcal{D}}
\newcommand{\G}{\mathcal{G}}
\newcommand{\Gal}{{\rm Gal}}
\newcommand{\Hom}{{\rm Hom}}
\newcommand{\Ind}{{\rm Ind}}
\newcommand{\Coind}{{\rm Coind}}
\newcommand{\Ext}{{\rm Ext}}
\newcommand{\cd}{{\rm cd}}
\newcommand{\ri}{\rightarrow}
\newcommand{\sri}{\twoheadrightarrow}
\newcommand{\iri}{\hookrightarrow}
\renewcommand{\k}{\kappa} % kummer homomorphism
\renewcommand{\l}{\ell}
\renewcommand{\d}{\delta}
\renewcommand{\L}{\Lambda}
\newcommand{\ov}{\overline}
\newcommand{\rank}{{\rm rank}}
\newcommand{\dl}[1]{\lim_{\buildrel \longrightarrow\over{#1}}}
\newcommand{\il}[1]{\lim_{\buildrel \longleftarrow\over{#1}}}
\newtheorem{defin}{Definition}[section]
\newtheorem{prop}[defin]{Proposition}
\newtheorem{lem}[defin]{Lemma}
\newtheorem{rem}[defin]{Remark}
\newtheorem{thm}[defin]{Theorem}
\newtheorem{cor}[defin]{Corollary}
\newtheorem{exe}[defin]{Example}
\font\tencyr=wncyr10
\def\cyr{\tencyr\cyracc}
\newcommand{\ts}{\mbox{\cyr Sh}}
\title[On Selmer groups of abelian varieties]{On Selmer groups of abelian varieties over $\ell$-adic Lie
extensions of global function fields}
\author{A. Bandini}
\author{M. Valentino}
\begin{document}

\begin{abstract}
Let $F$ be a global function field of characteristic $p>0$ and $A/F$ an abelian
variety. Let $K/F$ be an $\l$-adic Lie extension ($\l\neq p$) unramified 
outside a finite set of primes $S$ and such that $\Gal(K/F)$ has no elements
of order $\l$. We shall prove that, under certain conditions, $Sel_A(K)_\l^\vee$
has no nontrivial pseudo-null submodule. 
\end{abstract}

\maketitle

\noindent{\bf MSC (2010):} 11R23 (primary), 11G35 (secondary).

\noindent{\bf Keywords:} Selmer groups, abelian varieties, function fields, Lie extensions, pseudo-null.

\section{Introduction}
Let $G$ be a compact $\l$-adic Lie group and $\L(G)$ its associated Iwasawa algebra.
A crucial theme in Iwasawa theory is the study of finitely generated $\L(G)$-modules
and their structure, up to ``pseudo-isomorphism''.
When $G\simeq \Z_\l^d$ for some integer $d\geqslant 1$, the structure theory for finitely
generated $\L(G)$-modules is well known (see, e.g., \cite{B}). For a nonabelian $G$, which is the case we are
interested in, studying this topic is possible thanks to an appropriate definition of the concept of ``pseudo-null''
for modules over $\L(G)$ due to Venjakob (see \cite{Vj}).

Let $F$ be a global function field of trascendence degree one over its constant
field $\F_q$, where $q$ is a power of a fixed prime $p\in \Z$, and $K$ a Galois extension of $F$ unramified
outside a finite set of primes $S$ and such that $G=\Gal(K/F)$
is an infinite $\l$-adic Lie group ($\l\in\Z$ a prime different from $p$).
Let $A/F$ be an abelian variety: in \cite{BV}, we proved that $\mathcal{S}:=Sel_A(K)_\l^\vee$ (the Pontrjagin
dual of the Selmer group of $A$ over $K$) is finitely generated over $\L(G)$ and
here we shall deal with the presence of nontrivial pseudo-null submodules in $\mathcal{S}$.
For the number field setting and $K=F(A[\l^\infty])$, this issue was studied by Ochi and Venjakob
(\cite[Theorem 5.1]{OV}) when $A$ is an elliptic curve, and by Ochi for a general abelian variety
in \cite{O}.

In Sections \ref{SetNot} and \ref{SecVenj} we give a brief description of the objects we will work with and of the main
tools we shall need, adapting some of the techniques of \cite{OV} to our function field setting and to a general
$\l$-adic Lie extension (one of the main difference being the triviality of the image of the local Kummer maps).

In Section \ref{SecMain} we will prove the following

\begin{thm}[Theorem \ref{NoPsNThm}]
Let $G=\Gal(K/F)$ be an $\l$-adic Lie group without elements of order $\l$ and of
positive dimension $d\geqslant 3$. If
$H^2(F_S/K, A[\l^\infty])=0$ and the map $\psi$ (induced by restriction)
\begin{equation*}Sel_A(K)_\l \iri H^1(F_S/K, A[\l^\infty])
\stackrel{\psi}{\longrightarrow} \bigoplus_S \Coind_G^{G_v} H^1(K_w, A)[\l^\infty]
\end{equation*} is surjective, then $Sel_A(K)_\l^\vee$ has no nontrivial pseudo-null submodule.
\end{thm}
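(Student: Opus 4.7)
The plan is to adapt the argument of Ochi--Venjakob \cite{OV} to the function field setting, combining the global-to-local sequence defining $Sel_A(K)_\l$ with a criterion, phrased in terms of the Ext-functors $E^i := \Ext^i_{\L(G)}(-,\L(G))$, that characterises finitely generated $\L(G)$-modules without nontrivial pseudo-null submodule. The first step is to exploit the surjectivity hypothesis on $\psi$ to promote the defining sequence of the Selmer group to a short exact sequence
\begin{equation*}
0 \ri Sel_A(K)_\l \ri H^1(F_S/K, A[\l^\infty]) \stackrel{\psi}{\sri} \bigoplus_S \Coind_G^{G_v} H^1(K_w, A)[\l^\infty] \ri 0,
\end{equation*}
and then to dualise (Pontrjagin duality being exact on discrete $\l$-primary modules) to obtain
\begin{equation*}
0 \ri \bigoplus_S \Ind_G^{G_v}\left(H^1(K_w, A)[\l^\infty]^\vee\right) \ri H^1(F_S/K, A[\l^\infty])^\vee \ri \mathcal{S} \ri 0.
\end{equation*}

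Next I would control the middle module $\mathcal{X} := H^1(F_S/K, A[\l^\infty])^\vee$ and the local direct summands separately. For $\mathcal{X}$, the assumption $H^2(F_S/K,A[\l^\infty])=0$ together with the bound $\cd_\l \Gal(F_S/K)\leqslant 2$ (where $\l\neq p$ is essential) should force Jannsen's spectral sequence (in its $\l$-adic Lie version recalled in Section~\ref{SecVenj}) to degenerate enough to show that $\mathcal{X}$ has no nontrivial pseudo-null submodule, in analogy with the first half of \cite[Theorem~5.1]{OV}. For the local summands one appeals to Shapiro's lemma and local duality for $G_v$: in the function field setting the triviality of the image of the local Kummer maps (pointed out in the introduction) identifies $H^1(K_w,A)[\l^\infty]$ with $H^1(K_w,A[\l^\infty])$ and makes the computation of their $E^i$ considerably more transparent than in the number field case.

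Finally, these inputs are combined via the long exact Ext-sequence attached to the short exact sequence above: the vanishing and grade information for $\mathcal{X}$ and for the local summands forces the relevant $E^i(\mathcal{S})$ to satisfy the hypotheses of the criterion, yielding the absence of pseudo-null submodules in $\mathcal{S}$. The assumption $d\geqslant 3$ enters exactly here, pushing the Ext-contributions of the local terms into degrees high enough not to spoil the criterion. The step I expect to be the main obstacle is the analysis of $\mathcal{X}$: turning $H^2(F_S/K,A[\l^\infty])=0$ into a genuine statement on $E^i(\mathcal{X})$ requires careful tracking of the differentials in Jannsen's spectral sequence, and since here $K$ is \emph{not} assumed to be of the form $F(A[\l^\infty])$, one cannot rely on the specific simplifications used in \cite{OV}. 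Once this is accomplished, and the local summands are under control, the long Ext-sequence step is essentially bookkeeping.
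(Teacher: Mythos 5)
Your overall architecture — dualise the defining sequence using the surjectivity of $\psi$, invoke Venjakob's criterion $\E^i\E^i(\mathcal{S})=0$ for all $i\geqslant 2$, and then analyse the resulting long Ext-sequence using information on $X:=H^1(F_S/K,A[\l^\infty])^\vee$ and on the local summands $\Ind_{G_v}^G X_v$ — is indeed the skeleton of the paper's argument, and for $i\geqslant 3$ this comes down essentially to the computation of $\E^i(X)\simeq \E^{i+2}((A[\l^\infty]^\vee)_\H)$ and $\E^i_v\E^{i-1}_v(X_v)=0$ via Jannsen's structure theorem. However, there are two places where the plan as written would not go through.

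First, the goal you set for step~3 — showing that $\mathcal{X}=X$ itself has no nontrivial pseudo-null submodule — is not the right intermediate statement. In the short exact sequence
\[
0 \ri \bigoplus_S \Ind_{G_v}^G X_v \ri X \ri \mathcal{S} \ri 0,
\]
the module $\mathcal{S}$ appears as a \emph{quotient} of $X$, and the absence of pseudo-null submodules is not inherited by quotients (e.g.\ a free module of rank one has a finite, hence pseudo-null, quotient). What the paper actually extracts from $H^2(F_S/K,A[\l^\infty])=0$ is not a statement about pseudo-null submodules of $X$, but the explicit identification $\E^i(X)\simeq\E^{i+2}((A[\l^\infty]^\vee)_\H)$ for $i\geqslant 2$ obtained via the module $J$ in the exact sequence $X\iri Y\sri J$, together with the projectivity consequence $\E^2(Y)=0$. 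You need this quantitative control on $\E^i(X)$ fed into the long Ext-sequence, not a qualitative statement about $X$.

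Second, and more seriously, the case $i=2$ cannot be finished with only the ingredients you list. The long Ext-sequence attached to the dualised $\psi$-sequence gives $Coker(h_1)\iri \E^2(\mathcal{S})\to \E^2(X)$, and to control $\E^2(Coker(h_1))$ one needs the commutative diagram of Lemma~\ref{OcL3} (relating $\E^1(Y)$, $\E^1(X)$, $\E^2(J)$ and their local analogues) together with Lemma~\ref{OcL4}, which uses the Poitou--Tate sequence and the spectral sequence of Lemma~\ref{OcVe2Thm33} to show that $Coker(g_1)\simeq \il{n,m} H^0(F_S/F_n,(A[\l^m])')$ is finitely generated over $\Z_\l$, hence so is $Ker(f)$, giving $\E^2(Ker(f))=0$ for $d\geqslant 3$. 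None of this appears in your sketch, and without it there is no way to conclude $\E^2\E^2(\mathcal{S})=0$. Your claim that, once $\mathcal{X}$ is under control, ``the long Ext-sequence step is essentially bookkeeping'' is precisely where the real content of the proof lives; the powerful diagram and the Poitou--Tate input are not bookkeeping and are the genuinely nontrivial part of the argument.
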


\noindent For the case $d=2$ we need more restrictive hypotheses, in particular we have the following

\begin{prop}[Proposition \ref{dimcm2}]
Let $G=\Gal(K/F)$ be an $\l$-adic Lie group without elements of order $\l$ and of
dimension $d\geqslant 2$. If $H^2(F_S/K, A[\l^\infty])=0$ and
$\cd_\l(G_v)=2$ for any $v\in S$,
then $Sel_A(K)_\l^\vee$ has no nontrivial pseudo-null submodule.
\end{prop}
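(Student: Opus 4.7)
Set $\mathcal{M} := Sel_A(K)_\l^\vee$. My plan is to adapt the Ext-theoretic strategy used in the proof of Theorem \ref{NoPsNThm}, replacing the surjectivity hypothesis on $\psi$ with the local cohomological dimension bound $\cd_\l(G_v)=2$. By the criterion discussed in Section \ref{SecVenj} (following Venjakob), showing that $\mathcal{M}$ has no nontrivial pseudo-null submodule reduces to establishing suitable vanishing of the Ext groups $E^i(\mathcal{M}) := \Ext^i_{\L(G)}(\mathcal{M},\L(G))$ in the codimensions relevant to pseudo-nullity; the absence of $\l$-torsion in $G$ guarantees that $\L(G)$ is Auslander regular so that such a criterion is available.

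I would start from the defining exact sequence of the Selmer group
\[ 0 \to Sel_A(K)_\l \to H^1(F_S/K, A[\l^\infty]) \stackrel{\psi}{\longrightarrow} \bigoplus_{v \in S} \Coind_G^{G_v} H^1(K_w, A)[\l^\infty], \]
noting that in the function-field setting with $\l \neq p$ the local Kummer maps are trivial, so the local term reads $H^1(K_w, A[\l^\infty])$. Pontrjagin-dualizing and then applying the derived functors $E^i(-)$ produce a long exact sequence of Ext groups, in which Shapiro's lemma (or its analogue for coinduced modules) reduces the Ext of each (co)induced local piece to an Ext computation over the local Iwasawa algebra $\L(G_v)$.

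The hypothesis $\cd_\l(G_v)=2$ now plays the role that the surjectivity of $\psi$ plays in Theorem \ref{NoPsNThm}. Since every $G_v$ is $\l$-torsion-free, its cohomological dimension coincides with its dimension as an $\l$-adic Lie group, so $\L(G_v)$ has controlled global dimension and higher Ext groups of the local pieces vanish beyond the expected range. Combined with the global vanishing $H^2(F_S/K, A[\l^\infty])=0$, which eliminates the global $H^2$-obstruction in the Poitou--Tate / Jannsen spectral sequence exactly as in Theorem \ref{NoPsNThm}, this forces the vanishing of the problematic $E^i(\mathcal{M})$ and yields absence of nontrivial pseudo-null submodules.

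The delicate point is the tightness of dimension $d=2$: here pseudo-nullity is the strongest codimension condition the theory allows, and without the surjectivity of $\psi$ the cokernel of $\psi$ could a priori contribute a pseudo-null piece to $\mathcal{M}$ through the Poitou--Tate connecting maps. The role of $\cd_\l(G_v)=2$ is precisely to annihilate those contributions via the local cohomological bound (so that the ``next'' local $H^2$-terms do not leak into the long exact Ext sequence). Verifying carefully that no pseudo-null component survives in this Ext-chase, and in particular that the connecting maps behave as expected when $d=2$ and $\dim G_v = 2$ forces the decomposition groups to be open in $G$, is the main technical obstacle of the argument.
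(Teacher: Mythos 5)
Your proposal misses the actual mechanism by which the hypothesis $\cd_\l(G_v)=2$ is used, and as a result it would not close the argument, particularly for the borderline case $d=2$ which is the whole point of the Proposition. The paper's proof is not a variant of the Ext-chase in Theorem \ref{NoPsNThm}: it observes that since $\cd_\l(G_{F_v})=2$ (\cite[Theorem 7.1.8]{NSW}) and $\cd_\l(G_v)=2$ by hypothesis, the absolute Galois group $\Gal(\ov{F_v}/K_w)$ is forced to have $\l$-cohomological dimension $0$ (equivalently, no $\l$-torsion, via \cite[Theorem 7.5.3]{NSW}). This makes the \emph{entire} local cohomology group $H^1(K_w, A[\l^\infty])$ vanish, so every $X_v$ is zero, the map $\psi$ has trivial target, and $Sel_A(K)_\l = H^1(F_S/K, A[\l^\infty])$ on the nose. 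Hence $\mathcal{S} \simeq X$, which sits inside $Y$ by the exact sequence $X \iri Y \sri J$; since $H^2(F_S/K, A[\l^\infty])=0$ gives $Y$ projective dimension $\leqslant 1$, $Y$ has no nontrivial pseudo-null submodule (\cite[Proposition 2.5]{OV}), and neither does its submodule $\mathcal{S}$. The proof is two lines once you see this.

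Your proposal never extracts this consequence. You describe $\cd_\l(G_v)=2$ as controlling ``higher Ext groups of the local pieces'' and preventing ``the next local $H^2$-terms'' from leaking in, but $H^2(K_w, A[\l^\infty])=0$ is already known unconditionally (it is proved in Section \ref{SecVenj} via local duality); what the hypothesis really buys is $H^1(K_w, A[\l^\infty])=0$, i.e.\ the degree-one local terms die. Without noticing this, you have no replacement for the short exact sequence \eqref{SeqA}: when $\psi$ is not assumed surjective, $\psi^\vee$ need not be injective, and the three-term sequence $\bigoplus_S \Ind^G_{G_v} X_v \iri X \sri \mathcal{S}$ that the Ext-chase in Theorem \ref{NoPsNThm} is built on simply is not available. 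Moreover, the Theorem's Ext argument explicitly requires $d\geqslant 3$ (the $i=2$ step needs $\E^2(Ker(f))=0$, which fails in general when $d=2$, see Remark \ref{d=1,2}); running the same chase for $d=2$ would therefore stall exactly where the Proposition needs to go beyond the Theorem. The direct observation that $\mathcal{S}\simeq X$ embeds into a module of projective dimension $\leqslant 1$ is what rescues the case $d=2$ — and indeed all $d\geqslant 2$ — and this observation is absent from your plan.
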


\noindent A few considerations and particular cases for the vanishing of $H^2(F_S/K, A[\l^\infty])$ are included
at the end of Section \ref{SecMain}.\\

\noindent {\sc Acknowledgements} The authors thank F. Bars for valuable discussions and comments on earlier drafts
of this paper.

\section{Setting and notations}\label{SetNot}
Here we fix notations and conventions that will be used through the paper.

\subsection{Fields and extensions} Let $F$ be a global function field of trascendence degree
one over its constant field $\F_F=\F_q$, where $q$ is a power of a fixed
prime $p\in \Z$. We put $\ov{F}$ for an algebraic closure of $F$. \\
For any algebraic extension $L/F$, let $\M_L$ be the set
of places of $L$: for any $v\in \M_L$ we let $L_v\,$ be the completion of $L$ at $v$.
Let $S$ be a finite nonempty subset of $\M_F$ and let $F_S$ be the maximal Galois extension of $F$ unramified
outside $S$ with $G_S(F):=\Gal(F_S/F)$. Put $\mathcal{O}_{L,S}$ as the ring of $S$-integers of $L$
and $\mathcal{O}_S^\times$ as the units of $\mathcal{O}_S=\displaystyle{\bigcup_{L\subset F_S} \mathcal{O}_{L,S}}\,$.
Finally, $C\l_S(L)$ denotes the {\em $S$-ideal class group of $ \mathcal{O}_{L,S}$}: since $S$ is nonempty,
$C\l_S(L)$ is finite.\\
For any place $v\in \M_F$ we choose (and fix) an embedding $\ov{F} \iri \ov{F_v}\,$, in order to get a
restriction map $G_{F_v}:=\Gal(\ov{F_v}/F_v) \iri G_F:=\Gal(\ov{F}/F)\,$.\\

\noindent We will deal with {\em $\l$-adic Lie extensions} $K/F$, i.e.,  Galois
extensions with Galois group an $\l$-adic Lie group with $\l\neq p$. We always assume that our
extensions are unramified outside a finite set $S$ of primes of $\M_F\,$.

\noindent In what follows $\Gal(K/F)$ is an $\l$-adic Lie group {\em without points of order} $\l$, then
it has finite $\l$-cohomological dimension, which is equal to its dimension as an $\l$-adic
Lie group (\cite[Corollaire (1) p. 413]{Se2}).

\subsection{Ext and duals} For any $\l$-adic Lie group $G$ we denote by
\[ \L(G) = \Z_\l[[G]] := \il{U} \Z_\l [G/U] \]
the associated {\em Iwasawa algebra} (the limit is on the open normal subgroups of $G$).
From Lazard's work (see \cite{L}), we know that $\L(G)$ is Noetherian and, if $G$ is pro-$\l$
and has no elements of order $\l$, then $\L(G)$ is an integral domain.\\

\noindent For a $\L(G)$-module $M$ we consider the extension groups
\[ \E^i(M):={\rm Ext}^i_{\L(G)}(M,\L(G)) \]
for any integer $i$ and put $\E^i(M)=0$ for $i<0$ by convention.\\
Since in our applications $G$ comes from a Galois extension, we denote
with $G_v$ the decomposition group of $v\in\M_F$ for some prime $w|v$, $w\in \M_L$, and we use the notation
\[  \E^i_v(M):= {\rm Ext}^i_{\L(G_v)}(M,\L(G_v)) \ .\]

\noindent Let $H$ be a closed subgroup of $G$. For every $\L(H)$-module $N$ we consider the
$\L(G)$-modules
\[ \Coind^H_G(N):={\rm Map}_{\L(H)}(\L(G), N)\quad {\rm and}\quad \Ind_H^G(N):=N \otimes_{\L(H)}\L(G) \
\footnote{We use the notations of \cite{OV}, some texts, e.g. \cite{NSW}, switch the definitions of $\Ind^H_G(N)$ and $\Coind^H_G(N)$.}. \]

\noindent For a $\L(G)$-module $M$, we denote its Pontrjagin dual by $M^{\vee}:=\Hom_{cont}(M,\Q_\l/\Z_\l)$.
In this paper, $M$ will be a (mostly discrete) topological $\Z_\l$-module, so $M^{\vee}$ has a natural
structure of $\Z_\l$-module.

\noindent If $M$ is a discrete $G_S(F)$-module, finitely generated over $\Z$ and with no $p$-torsion,
in duality theorems we shall use also the dual $G_S(F)$-module of $M$, i.e.,
\[ M':=\Hom(M, \mathcal{O}_S^\times)\ (=\Hom(M, \boldsymbol{\mu})\ {\rm if}\ M\ {\rm is\ finite})\ . \]

\subsection{Selmer groups} Let $A$ be an abelian variety of dimension $g$ defined over $F$:
we denote by $A^t$ its dual abelian variety. For any positive integer
$n$ we let $A[n]$ be the scheme of $n$-torsion points and, for any prime $\l$, we put
$A[\l^\infty]:={\displaystyle \lim_{\rightarrow}}\, A[\l^n]\,$.\\

\noindent The {\em local Kummer maps} (for any $w\in\M_L\,$)
\[  \k_w : A(L_w) \otimes \Q_\l/\Z_\l \iri \dl{n} H^1(L_w,A[\l^n]):=
H^1(L_w,A[\l^\infty]) \]
(arising from the cohomology of the exact sequence $A[\l^n]\iri A{\buildrel \l^n \over\sri} A$)
enable one to define the {\em $\l$-part of the Selmer group} of $A$ over $L$ as
\[ Sel_A(L)_\l=Ker \left\{ H^1(L, A[\l^{\infty}]) \to \prod_{w\in\M_L}
H^1(L_w, A[\l^{\infty}])/Im\,\k_w \right\} \]
(where the map is the product of the natural restrictions between cohomology groups).\\
For infinite extensions $\mathcal{L}/F$ the Selmer group $Sel_A(\mathcal{L})_\l$
is defined, as usual, via direct limits.\\

\noindent Since $\l\neq p$, the $Im\,\k_w$ are trivial and, assuming that $S$ contains also
all primes of bad reduction for $A$, we have the following equivalent
\begin{defin}\label{DefSel} {\em The} $\l$-part of the Selmer group
{\em of $A$ over $L$ is}
\[Sel_A(L)_\l = Ker \left\{ H^1(F_S/L, A[\l^\infty](F_S)) \to
\bigoplus_S \Coind^{G_v}_G H^1(L_w, A[\l^\infty]) \right\} \ . \]
\end{defin}

\noindent Letting $L$ vary through subextensions of $K/F$, the groups $Sel_A(L)_\l$ admit natural
actions by $\mathbb{Z}_\l$ (because of $A[\l^\infty]\,$) and by $G=\Gal(K/F)$. Hence they are
modules over the Iwasawa algebra $\L(G)$.\\

\section{Homotopy theory and pseudo-nullity}\label{SecVenj}
We briefly recall the basic definitions for pseudo-null modules over a non-commutative Iwasawa algebra:
a comprehensive reference is \cite{Vj}.

\subsection{Pseudo-null $\L(G)$-modules} Let $G$ be an $\l$-adic Lie group without $\l$-torsion, 
then $\L(G)$ is an Auslander regular ring of finite global dimension $\mathfrak{d}=\cd_\l(G)+1$
(\cite[Theorem 3.26]{Vj}, $\cd_\l$ denotes the $\l$-cohomological dimension).\\
For any finitely generated $\L(G)$-module $M$, there is a canonical filtration
\[ T_0(M) \subseteq T_1(M) \subseteq \cdots \subseteq T_{\mathfrak{d}-1}(M) \subseteq T_{\mathfrak{d}}(M)=M \ . \]
\begin{defin}\label{DefPsN}
{\em We say that a $\L(G)$-module $M$ is} pseudo-null {\em if \[\d(M):={\rm min}\{ i\,|\,T_i(M)=M\}\leqslant \mathfrak{d}-2 \ .\]}
\end{defin}

\noindent The quantity $\d(M)$, called the {\em $\d$-dimension} of the $\L(G)$-module
$M$, is used along with the {\em grade} of $M$, that is
\[ j(M):={\rm min}\{i\,|\, \E^i(M)\neq 0 \} \ .\]
As $j(M)+\d(M)=\mathfrak{d}$ (\cite[Proposition 3.5 (ii)]{Vj}) we have that $M$ is a pseudo-null module
if and only if $\E^0(M)=\E^1(M)=0$.\\
Since $\d(T_i(M))\leqslant i$ and every $T_i(M)$ is the maximal submodule of $M$ with
$\d$-dimension less or equal to $i$ (\cite[Proposition 3.5 (vi) (a)]{Vj}), only $T_0(M)\,,\ldots,\,T_{\mathfrak{d}-2}(M)$
can be pseudo-null. If $T_0(M)\,=\cdots=\,T_{\mathfrak{d}-2}(M)=0$, $M$ does not have any nonzero
pseudo-null submodule. This is the case when $\E^i\E^i(M)=0\ \forall\, i\geqslant 2$
(\cite[Proposition 3.5 (i) (c)]{Vj}).

\subsection{The powerful diagram and its consequences} In \cite[Lemma 4.5]{OV} Ochi and Venjakob ge\-ne\-ra\-li\-zed
a result of Jannsen (see \cite{J}) which is very powerful in applications (they call it ``powerful diagram'').
We provide here the statements we shall need later: for the missing details of the proofs the reader can
consult \cite[Chapter V, Section 5]{NSW} and/or \cite[Section 4]{OV}\footnote{Those results hold in our setting
as well because we work with the $\L(G)$-module $A[\l^\infty]$, with $\l\neq p$.}.\\
Replacing, if necessary, $F$ by a finite extension we can (and will) assume that $K$
is contained in the maximal pro-$\l$ extension of $F_\infty:=F(A[\l^\infty])$ unramified
outside $S$. Then we have the following
\[ \begin{xy}
(40,15)*+{F_\infty}="v2";
(30,15)*+{K}="v3";
(30,30)*+{\Omega}="v4";
(30,40)*+{F_S}="v5";
(30,0)*+{F}="v1";
{\ar@{-}"v1";"v2"};
{\ar@{-}^{G}"v1";"v3"};
{\ar@{-}^{\mathcal{H}}"v3";"v4"};
{\ar@{-}"v2";"v4"};
{\ar@{-}"v4";"v5"};
{\ar@{-}@/_{2pc}/_{\mathcal{G}}"v4";"v1"};
\end{xy} \]
where $\Omega$ is the maximal pro-$\l$ extension of $F_\infty$ contained in $F_S$.
We put $\mathcal{G}=\Gal(\Omega/F)$, $\mathcal{H}=\Gal(\Omega/K)$ and $G=\Gal(K/F)$.
The extension $F_\infty/F$ will be called the {\em trivializing extension}.\\

\noindent Tensoring the natural exact sequence $I(\mathcal{G}) \iri \L(\mathcal{G})  \sri \Z_\l $
with $A[\l^\infty]^\vee\simeq \Z_\l^{2g}\,$, one gets
\[ I(\mathcal{G})\otimes_{\Z_\l} A[\l^\infty]^\vee \iri \L(\mathcal{G})\otimes_{\Z_\l}
A[\l^\infty]^\vee \sri A[\l^\infty]^\vee \ .\]
Since the mid term is projective (\cite[Lemma 4.2]{OV}), the previous sequence yields
\begin{equation}\label{SeqLon} H_1(\mathcal{H},A[\l^\infty]^\vee) \iri
(I(\mathcal{G})\otimes_{\Z_\l} A[\l^\infty]^\vee)_{\mathcal{H}} \to
(\L(\mathcal{G})\otimes_{\Z_\l} A[\l^\infty]^\vee)_{\mathcal{H}} \sri
(A[\l^\infty]^\vee)_{\mathcal{H}} \ .
\end{equation}
In order to shorten notations we put:
\begin{itemize}
\item[-] $X=H_1(\mathcal{H},A[\l^\infty]^\vee)\,$;
\item[-] $Y=(I(\mathcal{G})\otimes_{\Z_\l} A[\l^\infty]^\vee)_{\mathcal{H}}\,$;
\item[-] $J=Ker\{ (\L(\mathcal{G})\otimes_{\Z_\l} A[\l^\infty]^\vee)_{\mathcal{H}}
\sri (A[\l^\infty]^\vee)_{\mathcal{H}}\}\,$.
\end{itemize}
So the sequence \eqref{SeqLon} becomes
\begin{equation}\label{SeqShrt}
X \iri Y \sri J \ .
\end{equation}
For our purpose it is useful to think of $X$ as $H^1(F_S/K, A[\l^\infty])^\vee$ (note that
$H_1(\mathcal{H},A[\l^\infty]^\vee)\simeq H^1(\Omega/K, A[\l^\infty])^\vee\simeq H^1(F_S/K, A[\l^\infty])^\vee$).\\
Let $\mathcal{F}(d)$ denote a free pro-$\l$-group of rank $d=\dim \mathcal{G}$ and denote by $\mathcal{N}$ (resp.
$\mathcal{R}$) the kernel of the natural map $\mathcal{F}(d) \ri \mathcal{G}$ (resp. $\mathcal{F}(d) \ri G$).
For any profinite group $H$, we denote by $H^{ab}(\l)$ the maximal pro-$\l$-quotient of the maximal abelian quotient
of $H$. With this notations the powerful diagram reads as
\begin{equation}\label{PowDiag} \xymatrix{ H^2(\H,A[\l^\infty])^\vee \ar@{^(->}[r] \ar@{=}[d] &
(H^1(\mathcal{N}^{ab}(\l),A[\l^\infty])^\H)^\vee \ar[r] \ar[d]_{\simeq} &
H^1(\mathcal{R},A[\l^\infty]) \ar@{->>}[r] \ar@{^(->}[d] & X \ar@{^(->}[d] \\
H^2(\H,A[\l^\infty])^\vee \ar@{^(->}[r] &
(\mathcal{N}^{ab}(\l)\otimes A[\l^\infty]^\vee)_\H \ar[r] &
\L(G)^{2gd} \ar@{->>}[r] \ar@{->>}[d] & Y \ar@{->>}[d] \\
\ & \ & J \ar@{=}[r] &\ J \ .} \end{equation}
Moreover, since $cd_\l(\mathcal{G})\leqslant 2$ (just use \cite[Theorem 8.3.17]{NSW} and work as in \cite[Lemma 4.4, (iv)]{OV}),
the module $\mathcal{N}^{ab}(\l)\otimes A[\l^\infty]^\vee$ is free over $\L(\G)$ (\cite[Lemma 4.2]{OV}), hence
$(\mathcal{N}^{ab}(\l)\otimes A[\l^\infty]^\vee)_\H$ is projective as a $\L(\G/\H)=\L(G)$-module.
Therefore, if $H^2(F_S/K, A[\l^\infty])=0$, the module $Y$ has projective dimension $\leqslant 1$.
Whenever this is true the definition of $J$ provides the isomorphisms
\begin{equation}\label{BasicIso}
\E^i(X)\simeq\E^{i+1}(J)\qquad{\rm and}\qquad E^i(J)\simeq\E^{i+1}((A[\l^\infty]^\vee)_{\mathcal{H}})\quad
\forall\,i\geqslant2\ , \end{equation}
which will be repeatedly used in our computations.

\noindent We shall need also a ``localized'' version of the sequence \eqref{SeqShrt}.
For every $v\in S$ and a $w\in\M_K$ dividing $v$, we define
\[ X_v=H^1(K_w,A[\l^\infty])^\vee \qquad{\rm and}\qquad Y_v=(I(\G_v)\otimes_{\Z_\l} A[\l^\infty]^\vee)_{\H_v} \]
(with $\G_v$ the decomposition groups of $v$ in $\G$ and $\H_v=\H\cap\G_v\,$).
The exact sequence
\begin{equation}\label{SeqShrtLoc}
X_v \iri Y_v \sri J_v
\end{equation}
fits into the localized version of diagram \eqref{PowDiag}. If $K_w$ is still a local field, then Tate local duality
(\cite[Theorem 7.2.6]{NSW}) yields
\[ H^2(K_w,A[\l^\infty])=  H^2(K_w,\dl{n} A[\l^n])\simeq \il{n} H^0(K_w,A^t[\l^n])^\vee = 0 \ .\]
If $K_w$ is not local, then $\l^\infty$ divides the degree of the extension $K_w/F_v$ and $H^2(K_w,A[\l^\infty])=0$
by \cite[Theorem 7.1.8 (i)]{NSW}. Therefore $Y_v$ always has projective dimension $\leqslant 1$ and
\begin{equation}\label{BasicIsoLoc}
\E^i(X_v)\simeq\E^{i+1}(J_v)\simeq\E^{i+2}((A[\l^\infty]^\vee)_{\mathcal{H}_v})\quad
\forall\,i\geqslant2\ . \end{equation}

\noindent We note that, since $\l\neq p$, the image of the local Kummer maps is always 0, hence
\[ X_v=H^1(K_w,A[\l^\infty])^\vee = (H^1(K_w,A[\l^\infty])/Im\,\k_w)^\vee \simeq
H^1(K_w,A)[\l^\infty]^\vee \ .\]
Then Definition \ref{DefSel} for $L=K$ can be written as
\[ Sel_A(K)_\l = Ker \left\{ \psi\,:\,X^\vee \longrightarrow \bigoplus_S \Coind_G^{G_v} X_v^\vee \right\} \]
and, dualizing, we get a map
\[ \psi^\vee\,:\, \bigoplus_S \Ind^G_{G_v} X_v \longrightarrow X \]
whose cokernel is exactly $\mathcal{S}:=Sel_A(K)_\l^\vee\,$.\\

The following result will be fundamental for our computations.
\begin{thm}[U. Jannsen]\label{Jann}
Let $G$ be an $\l$-adic Lie group without elements of order $\l$ and of dimension $d$.
Let $M$ be a $\L(G)$-module which is finitely generated as $\Z_\l$-module. Then $\E^i(M)$
is a finitely generated $\Z_\l$-module and, in particular,
\begin{itemize}
\item[{\bf 1.}] if $M$ is $\Z_\l$-free, then $E^i(M)=0$ for any $i\neq d$ and $E^d(M)$ is free;
\item[{\bf 2.}] if $M$ is finite, then $E^i(M)=0$ for any $i\neq d+1$ and $E^{d+1}(M)$ is finite.
\end{itemize}
\end{thm}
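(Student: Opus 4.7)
The plan is to compute the Ext groups $\E^i(M) = \Ext^i_{\L(G)}(M, \L(G))$ using the Grothendieck change--of--rings spectral sequence associated to the factorization $\Hom_{\L(G)}(M,-) = (\Hom_{\Z_\l}(M,-))^G$, namely
\[ E_2^{p,q} = H^p(G, \Ext^q_{\Z_\l}(M, \L(G))) \Longrightarrow \Ext^{p+q}_{\L(G)}(M, \L(G)), \]
combined with Poincar\'e duality for the $\l$-adic Lie group $G$. The crucial preliminary ingredient, which I expect to be the hardest step, will be the ``dualizing module'' identification
\[ H^i(G, \L(G)) \simeq \begin{cases} \Z_\l & i = d, \\ 0 & i \neq d, \end{cases} \]
where $G$ acts on $\L(G)$ by left multiplication. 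I would obtain this by writing $\L(G) = \il{U} \Z_\l[G/U]$, applying Shapiro's lemma $H^i(G, \Z_\l[G/U]) \simeq H^i(U, \Z_\l)$ to each finite layer, and invoking Lazard--Serre Poincar\'e duality for the open subgroups $U$ (which are themselves torsion-free $\l$-adic Lie groups of dimension $d$, hence Poincar\'e duality groups at $\l$ with dualizing module $\Z_\l$); a Mittag-Leffler argument controls the projective limit. The delicacy here is that $\L(G)$ is profinite rather than discrete, so continuous cohomology does not automatically commute with the inverse limit.

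Once this is in place, case \textbf{1} follows quickly. If $M$ is finitely generated and $\Z_\l$-free of rank $r$, then $\Ext^q_{\Z_\l}(M,-) = 0$ for $q > 0$ and the spectral sequence collapses to
\[ \E^i(M) \simeq H^i(G, \Hom_{\Z_\l}(M, \L(G))). \]
The Hopf-algebra ``twist--untwist'' isomorphism $\phi \colon m \otimes x \mapsto g^{-1}m \otimes xg$ identifies $\Hom_{\Z_\l}(M, \L(G)) = M^\ast \otimes_{\Z_\l} \L(G)$ (with the diagonal $G$-action) with the same underlying module on which $G$ acts only through the right factor, so that
\[ \E^i(M) \simeq M^\ast \otimes_{\Z_\l} H^i(G, \L(G)), \]
where $M^\ast = \Hom_{\Z_\l}(M, \Z_\l)$ carries the trivial action after the twist. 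By the previous step, $\E^d(M) \simeq M^\ast$ is $\Z_\l$-free of rank $r$ and $\E^i(M) = 0$ for $i \neq d$.

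For case \textbf{2}, suppose $M$ is finite. Then $\Hom_{\Z_\l}(M, \L(G)) = 0$ since $\L(G)$ is $\Z_\l$-torsion-free, while a length-one $\Z_\l$-free resolution of $M$ (available because $\Z_\l$ is a PID) yields $\Ext^1_{\Z_\l}(M, \L(G)) \simeq M^\vee \otimes_{\Z_\l} \L(G)$ with diagonal $G$-action, where $M^\vee$ is the Pontrjagin dual of $M$. The spectral sequence collapses to the shifted formula
\[ \E^{i+1}(M) \simeq H^i(G, M^\vee \otimes_{\Z_\l} \L(G)) \simeq M^\vee \otimes_{\Z_\l} H^i(G, \L(G)), \]
so $\E^{d+1}(M) \simeq M^\vee$ is finite and $\E^i(M) = 0$ for $i \neq d+1$. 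Finally, for a general finitely generated $\Z_\l$-module $M$, the torsion/free short exact sequence $0 \to M_{tor} \to M \to M_{free} \to 0$ together with the long exact $\E$-sequence sandwiches each $\E^i(M)$ between two finitely generated $\Z_\l$-modules, yielding the overall finite-generation claim.
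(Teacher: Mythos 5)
The paper offers no argument for this theorem: its ``proof'' is the single citation to Jannsen \cite[Corollary 2.6]{J}. Your proposal is essentially a reconstruction of Jannsen's own argument (change-of-rings spectral sequence for $\Hom_{\L(G)}(M,-)=(\Hom_{\Z_\l}(M,-))^G$ plus the computation of $H^i(G,\L(G))$ via Shapiro and Lazard--Serre Poincar\'e duality), and it is correct in outline. Two small points are worth making explicit. First, $H^d(G,\L(G))$ is free of rank one over $\Z_\l$ but carries a possibly nontrivial $G$-action (the orientation character of the duality group); this is harmless here because the theorem only asserts $\Z_\l$-module-theoretic properties of $\E^i(M)$, but the identifications $\E^d(M)\simeq M^\ast$ and $\E^{d+1}(M)\simeq M^\vee$ should be understood up to this twist. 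Second, in case \textbf{2} the step $H^i(G,M^\vee\otimes_{\Z_\l}\L(G))\simeq M^\vee\otimes_{\Z_\l}H^i(G,\L(G))$ is not automatic for a finite coefficient module (a priori there is a $\mathrm{Tor}_1^{\Z_\l}(M^\vee,H^{i+1}(G,\L(G)))$ contribution); the clean way is to use the two-term resolution $0\to\Z_\l^a\xrightarrow{T}\Z_\l^a\to M^\vee\to 0$, tensor with $\L(G)$, untwist, and run the long exact cohomology sequence, which kills the extra term precisely because $H^d(G,\L(G))$ is $\Z_\l$-torsion-free. Finally, as you note, the real technical content --- justifying the spectral sequence and the passage to the limit $\L(G)=\il{U}\Z_\l[G/U]$ in continuous cohomology --- is exactly what Jannsen's paper is set up to handle, so your sketch defers the same difficulty that the paper defers by citation.
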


\begin{proof}
See \cite[Corollary 2.6]{J}.
\end{proof}

\begin{cor}\label{JannCor} With notations as above:
\begin{itemize}
\item[{\bf 1.}] if $H^2(F_S/K,A[\l^\infty])=0$, then, for $i\geqslant 2$,
\[ \E^i(X)\ is\ \left\{ \begin{array}{ll}
finite & if\ i=d-1 \\
free & if\ i=d-2\\
 0 & otherwise \end{array} \right. \ ;\]
\item[{\bf 2.}] $\E_v^i\E_v^{i-1}(X_v)=0$ for $i\geqslant 3$.
\end{itemize}
\end{cor}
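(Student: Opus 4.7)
The plan is to reduce both assertions to applications of Theorem \ref{Jann} via the isomorphisms \eqref{BasicIso} and \eqref{BasicIsoLoc}. The key input is that $A[\l^\infty]^\vee \simeq \Z_\l^{2g}$ is a free $\Z_\l$-module of finite rank, so the coinvariants $M := (A[\l^\infty]^\vee)_{\mathcal{H}}$ and $M_v := (A[\l^\infty]^\vee)_{\mathcal{H}_v}$, being quotients of it, are finitely generated $\Z_\l$-modules and thus fall within the hypotheses of Jannsen's result.

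First I would compute $\E^j(M)$. Decomposing via a short exact sequence $T \iri M \sri F$ with $T$ the (necessarily finite) $\Z_\l$-torsion submodule and $F$ a $\Z_\l$-free module of finite rank, Theorem \ref{Jann} gives $\E^j(F) = 0$ for $j \neq d$ with $\E^d(F)$ free, and $\E^j(T) = 0$ for $j \neq d+1$ with $\E^{d+1}(T)$ finite. Chasing the long exact sequence of $\Ext$ then yields $\E^j(M) = 0$ unless $j \in \{d, d+1\}$, with $\E^d(M)$ free and $\E^{d+1}(M)$ finite.

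For part \textbf{1}, the hypothesis $H^2(F_S/K, A[\l^\infty]) = 0$ activates \eqref{BasicIso}, giving $\E^i(X) \simeq \E^{i+2}(M)$ for $i \geq 2$. Translating the table above through the shift $j = i+2$ recovers exactly the trichotomy of the statement: $\E^i(X)$ is free for $i = d-2$, finite for $i = d-1$, and zero for all other $i \geq 2$.

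For part \textbf{2}, I would run the same decomposition and long-exact-sequence argument at a place $v$, with $d_v := \dim G_v$ in place of $d$, combined with \eqref{BasicIsoLoc}. This shows that for $i \geq 3$ the module $\E_v^{i-1}(X_v) \simeq \E_v^{i+1}(M_v)$ is either $0$, free of finite $\Z_\l$-rank, or finite. A second invocation of Theorem \ref{Jann}, now applied to $\E_v^{i-1}(X_v)$ itself, says that $\E_v^i$ of a free module vanishes unless $i = d_v$ and $\E_v^i$ of a finite module vanishes unless $i = d_v + 1$. Since the shift forces $i+1 = d_v$ or $i+1 = d_v+1$ in the two non-trivial cases, $i$ can never land on the non-vanishing degree, and $\E_v^i\E_v^{i-1}(X_v) = 0$ follows. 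The only real obstacle throughout is the bookkeeping of the three nested index shifts (from $X$ to $J$ to $M$ and then from $M$ to its non-vanishing $\Ext$-degree); once these are organized the argument is essentially formal.
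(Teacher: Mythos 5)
Your proof is correct and follows essentially the same route as the paper: reduce to Jannsen's theorem through the isomorphisms \eqref{BasicIso} and \eqref{BasicIsoLoc}, split $(A[\l^\infty]^\vee)_{\mathcal{H}}$ (resp.\ $(A[\l^\infty]^\vee)_{\mathcal{H}_v}$) into free and torsion parts, and track the index shifts. The paper simply observes the direct-sum decomposition $(A[\l^\infty]^\vee)_{\mathcal{H}} \simeq \Z_\l^r \oplus \Delta$ (available since $\Z_\l$ is a PID) and uses additivity of $\E^i$, which spares the long-exact-sequence chase you set up around $T \iri M \sri F$; but the content is the same, and your bookkeeping of the double application of Theorem \ref{Jann} in part \textbf{2} (using $d_v$ rather than $d$) is exactly what the paper's terse ``Use Theorem \ref{Jann} and \eqref{BasicIsoLoc}'' intends.
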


\begin{proof}
{\bf 1.} The hypothesis yields the isomorphism $\E^i(X)\simeq E^{i+2}((A[\l^\infty]^\vee)_{\mathcal{H}})$.
Since
\[ (A[\l^\infty]^\vee)_{\H}\simeq (A[\l^\infty]^{\H})^\vee=A[\l^\infty](K)^\vee\simeq \Z_\l^r \oplus \Delta \]
(with $0\leqslant r \leqslant 2g$ and $\Delta$ a finite group) and $\E^i(\Z_\l^r \oplus \Delta) =
\E^i(\Z_\l^r ) \oplus \E^i(\Delta)$, the claim follows from Theorem \ref{Jann}.\\
{\bf 2.} Use Theorem \ref{Jann} and the isomorphism in \eqref{BasicIsoLoc}.
\end{proof}

\begin{lem}\label{OcL3}
If $H^2(F_S/K,A[\l^\infty])=0$, then there is the following commutative diagram
\[ \begin{xy}
(0,0)*+{\E^1(Y)}="v1";
(35,0)*+{\bigoplus_S\Ind_{G_v}^G\E_v^1(Y_v)}="v2";
(70,0)*+{Coker(g_1)}="v3";
(0,-15)*+{\E^1(X)}="v4";
(35,-15)*+{\bigoplus_S\Ind_{G_v}^G\E_v^1(X_v)}="v5";
(70,-15)*+{Coker(h_1)}="v6";
(0,-30)*+{\E^2(J)}="v7";
(35,-30)*+{\bigoplus_S\Ind_{G_v}^G\E_v^2(J_v)}="v8";
(70,-30)*+{Coker(\bar{g}_1)\,.}="v9";
{\ar@{->}^{\!\!\!\!\!\!\!\!\!\!\!\!\!\!\!g_1}"v1";"v2"};
{\ar@{->>}"v2";"v3"};
{\ar@{->}^{\!\!\!\!\!\!\!\!\!\!\!\!\!\!\!h_1}"v4";"v5"};
{\ar@{->>}"v5";"v6"};
{\ar@{->}^{\!\!\!\!\!\!\!\!\!\!\!\!\!\!\!\bar{g}_1}"v7";"v8"};
{\ar@{->>}"v8";"v9"};
{\ar@{->}"v1";"v4"};
{\ar@{->>}"v4";"v7"};
{\ar@{->}"v2";"v5"};
{\ar@{->>}"v5";"v8"};
{\ar@{->}"v3";"v6"};
{\ar@{->>}^{f}"v6";"v9"};
\end{xy} \]
\end{lem}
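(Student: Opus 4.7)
The plan is to derive the diagram as the confluence of two pieces of data: on one hand, the $\E^i$--long exact sequences attached to the short exact sequences \eqref{SeqShrt} and \eqref{SeqShrtLoc} (which give the three columns); on the other, the natural map $\bigoplus_S\Ind_{G_v}^G(-)\to(-)$ which exists at the level of $X$, $Y$ and $J$ (and gives, after applying $\E^*$, the three horizontal arrows). Commutativity is then just naturality of these long exact sequences.

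First I would produce the two leftmost columns. Applying $\Hom_{\L(G)}(-,\L(G))$ to \eqref{SeqShrt} and using the hypothesis $H^2(F_S/K,A[\l^\infty])=0$, which (as observed after \eqref{PowDiag}) ensures $Y$ has projective dimension $\leqslant 1$, so $\E^i(Y)=0$ for $i\geqslant 2$, yields the tail
\[ \E^1(J)\ri \E^1(Y)\ri \E^1(X)\sri \E^2(J)\ri 0, \]
which is exactly the first column of the diagram. Similarly, at each $v\in S$ the module $Y_v$ has projective dimension $\leqslant 1$ (this holds unconditionally, as noted before \eqref{BasicIsoLoc}), so applying $\Hom_{\L(G_v)}(-,\L(G_v))$ to \eqref{SeqShrtLoc} and then $\bigoplus_S\Ind_{G_v}^G$ (which is exact, since $\L(G)$ is free over $\L(G_v)$) gives the middle column.

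Next I would produce the horizontal maps. The map $\psi^\vee:\bigoplus_S\Ind_{G_v}^G X_v\ri X$ is already in hand from the definition of $\mathcal{S}$. Analogous maps exist for $Y$ and $J$: the construction of the powerful diagram \eqref{PowDiag} is natural with respect to restriction to the decomposition subgroup $\G_v\subset\G$, so the localised diagram for $\G_v$ maps to the global one, producing compatible maps $\bigoplus_S\Ind_{G_v}^G Y_v\ri Y$ and $\bigoplus_S\Ind_{G_v}^G J_v\ri J$ fitting into a morphism of short exact sequences $(X_v,Y_v,J_v)\ri(X,Y,J)$. Applying $\E^i$ and using the change of rings identification
\[ \E^i\bigl(\Ind_{G_v}^G M\bigr)\simeq \Ind_{G_v}^G \E^i_v(M) \]
(which follows from $\L(G)$ being $\L(G_v)$-free and the projection formula for $\Ext$) produces the arrows $g_1$, $h_1$, $\bar g_1$, and, by naturality of the $\Ext$ long exact sequence applied to the morphism above, the two left squares of the diagram commute. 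The right column and its surjection $f$ are then forced by functoriality of cokernels, together with the snake lemma applied to the two columns (the bottom vertical arrow in column two is surjective because the local $\E^1_v(X_v)\sri\E^2_v(J_v)$ is surjective and $\Ind$ preserves surjections).

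The main obstacle is the naturality step: one has to check that the construction of the powerful diagram \eqref{PowDiag} localises compatibly at each $v\in S$, so that the triple $(X,Y,J)$ really is the target of the global-from-local map of short exact sequences. Concretely, this amounts to verifying that the tautological exact sequence $I(\G_v)\otimes A[\l^\infty]^\vee\iri\L(\G_v)\otimes A[\l^\infty]^\vee\sri A[\l^\infty]^\vee$ maps equivariantly to its global counterpart via the inclusion $\G_v\hookrightarrow\G$, and that taking $\H_v$--coinvariants is compatible with the induced map to $\H$--coinvariants. Once this bookkeeping is carried out, the rest of the lemma is standard homological algebra.
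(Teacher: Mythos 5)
Your proposal is correct and follows essentially the same route as the paper's own proof: the horizontal maps are obtained (exactly as in the paper) from the chain of maps induced by $\G_v\subseteq\G$, $\H_v\subseteq\H$ giving a morphism of short exact sequences $\bigoplus_S\Ind_{G_v}^G(X_v,Y_v,J_v)\to(X,Y,J)$; then one applies $\Ext$, truncates the long exact sequences using the vanishing $\E^2(Y)=\E^2(Y_v)=0$ (which is where $H^2(F_S/K,A[\ell^\infty])=0$ enters), and invokes $\E^i(\Ind_{G_v}^G M)\simeq\Ind_{G_v}^G\E_v^i(M)$ (the paper's \cite[Lemma 5.5]{OV}). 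Your extra remark about the surjectivity of $f$ — that it comes from the surjectivity of $\bigoplus_S\Ind_{G_v}^G\E_v^1(X_v)\twoheadrightarrow\bigoplus_S\Ind_{G_v}^G\E_v^2(J_v)$ passed to cokernels — is a small detail the paper leaves implicit, and is correct; the appeal to the ``snake lemma'' is not quite the right name (the columns are not short exact sequences), but the elementary cokernel-functoriality argument you also mention suffices.
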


\begin{proof} The inclusions $\mathcal{G}_v\subseteq\mathcal{G}$ and $\mathcal{H}_v\subseteq\mathcal{H}$ induce
the maps
\[ (I(\mathcal{G}_v)\otimes_{\Z_\l}A[\l^\infty]^\vee)_{\mathcal{H}_v}
\to(I(\mathcal{G}) \otimes_{\Z_\l}A[\l^\infty]^\vee)_{\mathcal{H}_v}
\to (I(\mathcal{G}) \otimes_{\Z_\l}A[\l^\infty]^\vee)_{\mathcal{H}} \ .\]
We have a homomorphism of $\L(G)$-modules $g:\bigoplus_S\Ind_{G_v}^G Y_v\to Y$
which, restricted to the $X_v$'s, provides the map $h: \bigoplus_S\Ind_{G_v}^G X_v\to X$.
So we have the following situation
\begin{equation}\label{1} \begin{xy}
(30,0)*+{\bigoplus_S\Ind_{G_v}^GX_v}="v1";
(30,-15)*+{\bigoplus_S\Ind_{G_v}^GY_v}="v2";
(30,-30)*+{\bigoplus_S\Ind_{G_v}^GJ_v}="v3";
(0,0)*+{X}="v4";
(0,-15)*+{Y}="v5";
(0,-30)*+{J}="v6";
{\ar@{^{(}->}"v1";"v2"};
{\ar@{->>}"v2";"v3"};
{\ar@{^{(}->}"v4";"v5"};
{\ar@{->>}"v5";"v6"};
{\ar@{->}_{\!\!\!\!h}"v1";"v4"};
{\ar@{->}_{\!\!\!\!g}"v2";"v5"};
{\ar@{-->}_{\!\!\!\!\bar{g}}"v3";"v6"};
\end{xy} \end{equation}
where $\bar{g}$ is induced by $g$ and the diagram is obviously commutative.\\
Since $Y$ and the $Y_v$'s have projective dimension $\leqslant 1$ (i.e., $\E^2(Y)=\E^2(Y_v)=0$),
the lemma follows by taking ${\rm Ext}$ in diagram \eqref{1} and recalling that, for any $i\geqslant 0$,
$\E^i_v(\Ind^G_{G_v}(X_v))=\Ind^G_{G_v}\E^i_v(X_v)$ (see \cite[Lemma 5.5]{OV}).
\end{proof}

In the next subsection we are going to describe the structure of $Coker(g_1)\,$.

\subsection{Homotopy theory and $Coker(g_1)$} For every finitely generated $\L(G)$-module $M$ choose a presentation
$ P_1 \to P_0 \to M \to 0 $ of $M$ by projectives
and define the {\em transpose} functor $DM$ by the exactness of the sequence
\[ 0 \to  \E^0(M) \to \E^0(P_0) \to \E^0(P_1) \to  DM \to 0.\]
Then it can be shown that the functor $D$ is well-defined and one has $D^2 = Id$
(see \cite{J}).
%There is also the following sequence due to Jannsen (\cite[Lemma 1.8]{J}):
%\begin{equation}\label{reflexive} \E^1(DM) \iri M \to \E^0\E^0(M) \sri \E^2(DM).
%\end{equation}The most important use of the transpose functor is explained by the next
%lemma.

%\begin{lem}\label{tranTor}
%Assume $\L$ is an integral domain. Let $M$ be a finitely generated $\L$-module. Then the torsion elements of $M$
%form a submodule of $M$, and it is isomorphic to $\E^1(DM)$.
%\end{lem}

%\begin{proof}
%See \cite[Lemma 3.1]{OV}.
%\end{proof}

\begin{defin}\label{DefZmod}
{\em Let $L$ be an extension of $F$ contained in $F_S$. Then we define}
\[ Z(L):= H^0(F_S/L,\dl m D_2(A[\l^m]))^\vee\]
{\em where}
\[ D_2(A[\l^m])=\dl {F\subset E \subset F_S} (H^2(F_S/E,A[\l^m]))^\vee\]
{\em and the limit in $\displaystyle{\dl m D_2(A[\l^m])}$ is taken with respect to the $\l$-power
map $A[\l^{m+1}]\stackrel{\l}{\to}A[\l^m]$.}\\
{\em In the same way we define} $Z(L)$ {\em for any  Galois extension $L$ of $F_v$.}
\end{defin}

An alternative description of the module $Z$ is provided by the following

\begin{lem}\label{DescrZ}
Let $K$ be a fixed extension of $F$ contained in $F_S$ and $K_w$ its completion
for some $w|v\in S$. Then
\[ Z(K) \simeq \il {F\subseteq L \subseteq K} H^2(F_S/L,T_\l(A)) \quad {\rm and}\quad
 Z(K_w) \simeq \il {F_v\subseteq L \subseteq K_w} H^2(L,T_\l(A)) \ .\]
\end{lem}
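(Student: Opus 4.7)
The plan is to unravel the definition of $Z(K)$, take the Pontrjagin dual, and systematically interchange limits and colimits until the right-hand side appears. Throughout, I will use repeatedly the facts that Pontrjagin duality interchanges discrete and compact groups and turns direct limits into inverse limits (and vice versa), and that taking $G$-invariants ($H^0$) commutes with filtered colimits of discrete $G$-modules.

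First I would dualize the definition to obtain
\[ Z(K)^\vee = H^0\bigl(F_S/K, \dl m D_2(A[\l^m])\bigr) = \dl m D_2(A[\l^m])^{G_S(K)} \ , \]
using commutation of invariants with the filtered colimit in $m$. The key step is then to identify
\[ D_2(A[\l^m])^{G_S(K)} \ \simeq\ \dl{E/F\ {\rm finite},\ E\subseteq K} H^2(F_S/E, A[\l^m])^\vee \ . \]
Since finite Galois $E/F$ are cofinal among all $F \subseteq E \subseteq F_S$, each component $H^2(F_S/E,A[\l^m])^\vee$ in the colimit defining $D_2$ carries the natural action of $G(E/F)=G_S(F)/G_S(E)$; transitioning along corestriction duals is compatible with these actions. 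An element fixed by $G_S(K)$ must, by a cofinality argument, come from a component indexed by some $E \subseteq K$ (on which $G_S(K)$ already acts trivially).

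Once this is established, applying Pontrjagin duality once more (which converts the double colimit into a double inverse limit and gives $(Z(K)^\vee)^\vee=Z(K)$, as both sides are locally compact abelian) yields
\[ Z(K) \ \simeq\ \il{E\subseteq K} \il m H^2(F_S/E, A[\l^m]) \ . \]
For each finite $E/F$ the groups $H^2(F_S/E,A[\l^m])$ are finite (since $G_S(E)$ has finite $\l$-cohomological dimension and $A[\l^m]$ is finite), so the Mittag-Leffler condition is automatic and continuous cohomology commutes with the inverse limit:
\[ \il m H^2(F_S/E, A[\l^m]) \ =\ H^2(F_S/E, T_\l(A)) \ . \]
Finally, since for an arbitrary $F \subseteq L \subseteq K$ one has $H^2(F_S/L, T_\l(A)) = \il{E\subseteq L} H^2(F_S/E, T_\l(A))$ by continuity, the inverse limit over finite $E\subseteq K$ equals the inverse limit over all $F \subseteq L \subseteq K$, giving the first isomorphism.

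For the local statement, one replaces $F_S/L$ by the absolute Galois group of $L$ (for $F_v \subseteq L \subseteq K_w$) throughout, and the same chain of identifications goes through unchanged; here the finiteness of $H^2(L, A[\l^m])$ for finite $L/F_v$ (ensuring Mittag-Leffler) follows from local Tate duality exactly as used in the paragraph preceding \eqref{BasicIsoLoc}. The main obstacle I anticipate is the identification of $G_S(K)$-invariants in the direct limit defining $D_2$: one must track carefully how the Galois action is transported along the transition maps of the colimit, and verify that fixed elements really are represented in the subsystem indexed by $E\subseteq K$; everything else is a formal manipulation of limits under Pontrjagin duality.
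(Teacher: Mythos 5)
Your plan follows the right formal outline, but the step you yourself single out as ``the main obstacle'' --- identifying $D_2(A[\l^m])^{G_S(K)}$ with the direct limit of $H^2(F_S/E, A[\l^m])^\vee$ over $E\subseteq K$ --- is exactly where the content of the proof lies, and the ``cofinality argument'' you sketch does not supply it. Taking $G_S(K)$-invariants does commute with the filtered colimit defining $D_2$, so one obtains the direct limit over \emph{all} $E$ of $\bigl(H^2(F_S/E, A[\l^m])^\vee\bigr)^{G_S(K)}$; but for $E\not\subseteq K$ the $G_S(K)$-action on $H^2(F_S/E, A[\l^m])^\vee$ is in general nontrivial, the transition maps (Pontrjagin duals of corestriction) need not be injective, and there is no intermediate tower $E\subseteq E'\subseteq K$ along which a $G_S(K)$-fixed element of such a component could be compared with the subsystem indexed by $E\subseteq K$. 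So the identity you rely on is not a formal consequence of what you wrote, and you have given no argument for it.

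The paper supplies the missing ingredient with a tool you did not use: it first observes that $H^2(F_S/L, A[\l^\infty]) = \ts^2(F_S/L, A[\l^\infty])$ (because the local $H^2$'s vanish) and then applies Poitou--Tate global duality (\cite[Ch.~I, Proposition 6.9]{Mi1}) to re-express each $\ts^2(F_S/L, A[\l^m])^\vee$ as a Shafarevich--Tate group of the dual module $A^t[\l^m]$ over $L$. This puts the colimit over $L$ in a form whose Galois structure is transparent, so that passage to $G_S(K)$-invariants genuinely cuts it down to the subsystem with $L\subseteq K$; dualizing once more and interchanging the limits then gives the lemma. The rest of your manipulations (finiteness of the $H^2(F_S/E, A[\l^m])$, Mittag--Leffler, the identification $\il{m} H^2(F_S/E, A[\l^m]) = H^2(F_S/E, T_\l(A))$, reduction to finite subextensions, and the parallel local argument via Tate local duality) agree with the paper and are fine; the global duality step is the one essential piece you are missing.
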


\begin{proof} {\bf Global case.} For any global field $L$, let
\[ \ts^i(F_S/L,A[\l^\infty]):=Ker \left\{ H^i(F_S/L,A[\l^\infty]) \ri \bigoplus_S H^i(L_w,A[\l^\infty])\right\}\ .\]
We have already seen that $H^2(L_w,A[\l^\infty])=0$, hence
$H^2(F_S/L,A[\l^\infty])= \ts^2(F_S/L,A[\l^\infty])$. Using the pairing of
\cite[Ch. I, Proposition 6.9]{Mi1}, we get
\begin{align*}
Z(K) & =H^0(F_S/K, \displaystyle{\dl m \dl {F\subseteq L \subseteq F_S} \ts^2(F_S/L, A[\l^m])^\vee})^\vee\\
 & =\displaystyle{H^0(F_S/K,\dl m \dl {F\subseteq L \subseteq F_S} \ts^0(F_S/L, A^t[\l^m]))^\vee}\\
 & =(\displaystyle{\dl m \dl {F\subseteq L \subseteq F_S} \ts^0(F_S/L, A^t[\l^m])^{\Gal(F_S/K)}})^\vee\\
 & =(\displaystyle{\dl m \dl {F\subseteq L \subseteq K} \ts^0(F_S/L, A^t[\l^m])})^\vee\\
 & =\displaystyle{\il m \il {F\subseteq L \subseteq K} (H^2(F_S/L, A[\l^m])^\vee)^\vee}\\
 & =\displaystyle{\il {F\subseteq L \subseteq K} H^2(F_S/L,T_\l(A))\ .}
\end{align*}
{\bf Local case.} The proof is similar (using Tate local duality).
%\begin{align*}
%Z(K_w)& =H^0(K_w, \displaystyle{\dl m \dl {F_v\subseteq L \subseteq F_v^s} H^2(L, A[\l^m])^\vee})^\vee\\
%  & =H^0(K_w, \displaystyle{\dl m \dl {F_v\subseteq L \subseteq F_v^s} H^0(L, A[\l^m]')})^\vee\\
%  & =(\displaystyle{\dl m \dl {F_v\subseteq L \subseteq F_v^s} H^0(L, A[\l^m]')^{\Gal(F_v^s/K_w)}})^\vee\\
%  & =(\displaystyle{\dl m \dl {F_v\subseteq L \subseteq K_w} H^0(L, A[\l^m]')})^\vee\\
%  & =\displaystyle{\il m \il {F_v\subseteq L \subseteq K_w} (H^2(L, A[\l^m])^\vee)^\vee}\\
%  & =\displaystyle{\il {F_v\subseteq L \subseteq K_w} H^2(L,T_\l(A))\ .}
%\end{align*}
\end{proof}

We recall that our group $G$ has no elements of order $\l$, hence $\L(G)$ is a domain. Moreover
for any open subgroup $U$ of $G$ we have that (see \cite[Lemma 2.3]{J})
\[ \E^i(U)\simeq \E^i(G)\quad \forall\,i\in\Z\]
is an isomorphism of $\L(U)$-modules. An $\l$-adic Lie group $G$ always contains an
open pro-$\l$ subgroup (\cite[Corollary 8.34]{DdSMS}), so, in order to use properly the
usual definitions of ``torsion submodule'' and ``rank'' for a finitely generated $\L(G)$-module, with no loss
of generality, we will assume that $G$ is pro-$\l$.\\

\begin{prop}\label{ExtTor}
Let $M$ be a finitely generated $\L(G)$-module. Then $\E^i(M)$ is a finitely ge\-ne\-ra\-ted torsion $\L(G)$-module
for any $i\geqslant 1$.
\end{prop}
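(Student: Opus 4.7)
The statement has two contents: finite generation of $\E^i(M)$ over $\L(G)$, and its being $\L(G)$-torsion for $i\geqslant 1$. I would handle them separately.

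For finite generation the argument is essentially formal. By Lazard (cited at the start of the paper), $\L(G)$ is (left and right) Noetherian, so any finitely generated $\L(G)$-module $M$ admits a resolution
\[ \cdots \to P_2 \to P_1 \to P_0 \to M \to 0 \]
with each $P_i$ a finitely generated projective $\L(G)$-module. Applying $\Hom_{\L(G)}(-,\L(G))$ gives a cochain complex of finitely generated $\L(G)$-modules (each $\Hom_{\L(G)}(P_i,\L(G))$ is finitely generated since $P_i$ is), and then $\E^i(M)$, being a subquotient of $\Hom_{\L(G)}(P_i,\L(G))$, is finitely generated by Noetherianity.

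For the torsion assertion I would invoke the Auslander regularity of $\L(G)$ already recorded in the subsection: one has the Auslander condition that for every finitely generated $\L(G)$-module $N$ and every submodule $N'\subseteq\E^i(N)$, $j(N')\geqslant i$. Applied to $N = M$ and $N' = \E^i(M)$ itself this yields $j(\E^i(M)) \geqslant i \geqslant 1$ whenever $i\geqslant 1$, i.e.\ $\E^0(\E^i(M)) = 0$. Equivalently, via the equality $j(\cdot)+\d(\cdot)=\mathfrak{d}$ from \cite[Proposition 3.5 (ii)]{Vj}, one has $\d(\E^i(M))\leqslant \mathfrak{d}-1$.

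To conclude I would use that $\L(G)$ is a Noetherian domain (the paper already reduces to $G$ pro-$\l$ with no element of order $\l$, which is precisely Lazard's hypothesis for $\L(G)$ to be a domain) together with the standard equivalence: a finitely generated module $N$ over a Noetherian domain $\L(G)$ is $\L(G)$-torsion (i.e.\ $N\otimes_{\L(G)} Q(\L(G))=0$, where $Q(\L(G))$ is the Goldie skew field of fractions) if and only if $\E^0(N)=\Hom_{\L(G)}(N,\L(G))=0$. One direction is immediate; for the converse, if $N$ were non-torsion the quotient $N/T(N)$ would be a nonzero torsion-free finitely generated module, hence would embed (by clearing a common denominator from its image in $N\otimes Q(\L(G))\simeq Q(\L(G))^n$) into $\L(G)^n$, yielding a nonzero map to $\L(G)$.

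The only real obstacle is this last equivalence between ``torsion'' and ``$\E^0=0$'' in the noncommutative setting; the Noetherian domain hypothesis plus Goldie's theorem make the denominator-clearing step go through, after which everything else is bookkeeping around the Auslander condition.
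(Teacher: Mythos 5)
Your argument is correct, but it follows a genuinely different route from the paper's. The paper's proof is a rank-counting/dimension-shifting argument: take a finite presentation $0\to R_1\to P_0\to M\to 0$, use that $M$ and $\E^0(M)$ have the same $\L(G)$-rank to deduce $\rank_{\L(G)}\E^1(M)=0$ directly from the long exact sequence, and then iterate the dimension shift $\E^i(M)\simeq\E^{i-1}(R_1)$ for $i\geqslant 2$. Your proof instead invokes the Auslander condition (already available from \cite[Theorem 3.26]{Vj} cited earlier): for $i\geqslant 1$, any submodule of $\E^i(M)$ has grade $\geqslant i$, so $j(\E^i(M))\geqslant 1$ and hence $\E^0(\E^i(M))=0$; this is then converted into ``torsion'' via the Goldie/Ore equivalence between $\E^0(N)=0$ and $N\otimes_{\L(G)}Q(\L(G))=0$ for finitely generated $N$ over the Noetherian domain $\L(G)$. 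Both proofs are valid. The paper's version is slightly more elementary, resting only on additivity of rank (avoiding the full strength of Auslander regularity and the Goldie localization), whereas yours is more conceptual and makes it visible that the statement is really just the grade estimate $j(\E^i(M))\geqslant i$ in a slightly different garb; your route also packages the finite-generation claim explicitly, which the paper leaves implicit. The one step in your proposal that deserves a word of care is the ``clearing denominators'' embedding $N/T(N)\hookrightarrow\L(G)^n$: over a noncommutative Ore domain this requires finding a common left (or right) denominator for finitely many fractions, which the Ore condition guarantees but which is not as immediate as in the commutative case; as stated you acknowledge this, and it does go through.
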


\begin{proof} Take a finite presentation $P_1 \to P_0 \to M \to 0$
with finitely generated and projective $\L(G)$-modules $P_1$ and $P_0\,$,
and the consequent exact sequence
\begin{equation}\label{FinPres}
 0\to R_1 \to P_0 \to M \to 0 \end{equation}
for a suitable submodule $R_1$ of $P_1\,$. Since $M$ and $\Hom_{\L(G)}(M,\L(G))$ have the same $\L(G)$-rank,
computing ranks in the sequence coming from \eqref{FinPres}
\begin{align*} \Hom_{\L(G)} & (M,\L(G)) \iri \Hom_{\L(G)}(P_0,\L(G)) \to \Hom_{\L(G)}(R_1,\L(G)) \to \E^1(M) \to \\
 & \to 0 \to \E^1(R_1) \to \E^2(M) \to 0 \to \cdots \to 0 \to \E^{i-1}(R_1) \to \E^i(M) \to 0 \to \cdots
\end{align*}
one finds $\rank_{\L(G)}(\E^1(M))=0$ for any finitely generated $\L(G)$-module $M$.
Therefore $\E^1(R_1)$ is torsion, which yields $\E^2(M)\simeq \E^1(R_1)$ is torsion. Iterating
$\E^i(M)\simeq \E^{i-1}(R_1)$ is $\L(G)$-torsion $\forall$ $i\geqslant 2$.
\end{proof}

\begin{lem}\label{OcVe2Thm33}
Let $F_n$ be subfields of $K$ such that $\Gal(K/F)={\displaystyle \il n}\, \Gal(F_n/F)\,$.
Then
\[ H^2_{Iw}(K_w,T_{\l}(A)):={\displaystyle \il {n,m}}\, H^2(F_{v_n},A[\l^m]) \]
is a torsion $\L(G_v)$-module. If $H^2(F_S/K,A[\l^\infty])=0$, then
\[ H^2_{Iw}(K,T_{\l}(A)):={\displaystyle \il {n,m}}\, H^2(F_S/F_n,A[\l^m]) \]
is a $\L(G)$-torsion as well.
\end{lem}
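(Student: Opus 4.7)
The plan is to begin by invoking Lemma \ref{DescrZ} to identify the two Iwasawa cohomology modules with the corresponding $Z$-modules, namely $H^2_{Iw}(K_w,T_\l(A))\simeq Z(K_w)$ and $H^2_{Iw}(K,T_\l(A))\simeq Z(K)$. The proof then splits naturally into a local and a global piece.

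For the local statement I would apply Tate's local duality at each finite layer. Because $\l$ is coprime to the residue characteristic of $F_v$, for every finite extension $F_{v_n}/F_v$ one obtains
\[ H^2(F_{v_n},T_\l(A))\simeq A^t(F_{v_n})[\l^\infty]^\vee. \]
The natural inclusions $A^t(F_{v_n})[\l^\infty]\subseteq A^t(F_{v_{n+1}})[\l^\infty]$ dualize to surjections, so passage to the inverse limit is unproblematic and yields
\[ H^2_{Iw}(K_w,T_\l(A))\simeq A^t(K_w)[\l^\infty]^\vee, \]
a module of $\Z_\l$-rank at most $2g$. Since $G_v$ is an $\l$-adic Lie group of positive dimension and any $\L(G_v)$-module that is finitely generated as a $\Z_\l$-module has trivial generic rank over $\L(G_v)$, the first assertion of the lemma follows.

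For the global statement the plan is to reverse the chain of dualities used in the proof of Lemma \ref{DescrZ} and exploit global Poitou-Tate duality. At every finite layer $F_n\subseteq K$ the local $H^2$-contributions vanish (as recorded in the discussion preceding \eqref{BasicIsoLoc}), so the Poitou-Tate nine-term sequence for $A[\l^m]$ simplifies and provides an identification of $H^2(F_S/F_n,T_\l(A))$ with a Shafarevich-Tate-type module on the $A^t$-side. The hypothesis $H^2(F_S/K,A[\l^\infty])=0$ translates through Tate duality into the vanishing of the corresponding $\ts^0$-groups in the direct limit, which in turn forces the $\L(G)$-rank of the inverse limit $H^2_{Iw}(K,T_\l(A))$ to be zero. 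Torsion would then follow, and could be confirmed cleanly by applying Proposition \ref{ExtTor} to any resulting positive-degree Ext-description of the module.

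The main obstacle is the global case: the double inverse limit (in the Galois layers and in the $\l$-power levels) has to be interchanged with the Poitou-Tate pairings, and this requires a Mittag-Leffler argument on the finite $\l$-primary groups involved. Once the limit operations are justified, the rank-zero conclusion drops out from the vanishing hypothesis and the torsion statement is immediate.
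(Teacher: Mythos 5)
Your local argument is a genuinely different (and more explicit) route than the paper's: you use Tate local duality at each finite layer to identify $H^2_{Iw}(K_w,T_\l(A))$ with $A^t(K_w)[\l^\infty]^\vee$, a $\Z_\l$-finitely generated module, and then invoke positive dimension of $G_v$. This works, but note that it needs a small case split you elide: when $G_v$ is finite (e.g.\ $v$ splits completely), your general principle ``$\Z_\l$-finitely generated $\Rightarrow$ $\L(G_v)$-torsion'' fails, and one must instead observe that $K_w$ is then still a local field, so $A^t(K_w)[\l^\infty]$ is finite and its dual is torsion over $\Z_\l[G_v]$ for that reason. The paper sidesteps this entirely by running the same spectral-sequence argument locally and globally (``the proofs are identical''), leaning on Proposition~\ref{ExtTor} instead of on any dimension count.

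For the global statement there is a genuine gap. You assert that $H^2(F_S/K,A[\l^\infty])=0$ ``translates through Tate duality into the vanishing of the corresponding $\ts^0$-groups in the direct limit,'' i.e.\ that $\ts^0(F_S/K,A^t[\l^\infty])=Z(K)^\vee$ vanishes. But $H^2(F_S/K,A[\l^\infty])$ is the \emph{direct} limit $\displaystyle\dl{n,m}\ts^2(F_S/F_n,A[\l^m])$ (restriction in $n$, inclusion $A[\l^m]\iri A[\l^{m+1}]$ in $m$), whose Pontrjagin dual is the \emph{inverse} limit $\displaystyle\il{n,m}\ts^0(F_S/F_n,A^t[\l^m])$ with corestriction/norm and multiplication-by-$\l$ transition maps. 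That is not the same object as $\displaystyle\dl{n,m}\ts^0(F_S/F_n,A^t[\l^m])=\ts^0(F_S/K,A^t[\l^\infty])$, which uses restriction and inclusion. The two are limits of the same family of finite groups but along different maps, and vanishing of one does not force vanishing of the other; in fact $Z(K)=0$ is essentially a fine-Selmer-group vanishing statement and is much stronger than what the lemma asserts. The paper avoids this altogether: it uses Jannsen's spectral sequence $\E_2^{p,q}=\E^p(H^q(F_S/K,A[\l^\infty])^\vee)\Rightarrow H^{p+q}_{Iw}(K,T_\l(A))$, observes that the hypothesis kills $\E_2^{0,2}$, and that the remaining graded pieces $\E^1(H^1(\cdot)^\vee)$ and $\E^2(H^0(\cdot)^\vee)$ are automatically $\L(G)$-torsion by Proposition~\ref{ExtTor}, giving torsion (not vanishing) of $H^2_{Iw}(K,T_\l(A))$. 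Your closing remark that torsion ``could be confirmed by applying Proposition~\ref{ExtTor} to any resulting positive-degree Ext-description'' gestures at exactly the right tool, but you never produce such a description; producing it is the entire content of the paper's argument.
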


\begin{proof}
The proofs are identical so we only show the second statement. From the spectral sequence
\[ \E_2^{p,q}=E^p(H^q(F_S/K,A[\l^\infty])^\vee)\ \Longrightarrow\ H^{p+q}_{Iw}(K,T_\l(A))\]
due to Jannsen (see \cite{J1}), we have a filtration for $H^2_{Iw}(K,T_\l(A))$
\begin{equation}\label{filtr} 0= H^2_3 \subseteq H^2_2 \subseteq H^2_1 \subseteq H^2_0=H^2_{Iw}(K,T_\l(A)) \ ,
\end{equation}
which provides the following sequences:
\begin{align*} \E^0(H^1(F_S/K,A[\l^\infty])^\vee) & \to \E^2(H^0(F_S/K,A[\l^\infty])^\vee) \to  H^2_1 \\
 & \to \E^1(H^1(F_S/K,A[\l^\infty])^\vee) \to \E^3(H^0(F_S/K,A[\l^\infty])^\vee) \end{align*}
and
\[ H^2_1 \iri H^2_{Iw}(K,T_\l(A)) \sri \E_{\infty}^{0,2} \ . \]
By hypothesis $\E_{\infty}^{0,2}\simeq \E_2^{0,2}=0$, so $H^2_1 \simeq H^2_{Iw}(K,T_\l(A))$.\\
Since $H^i(F_S/K,A[\l^\infty])^\vee$ is a finitely generated $\L(G)$-module for $i\in\{0,1\}$
(for $i=1$ just look at $X$ in diagram \eqref{PowDiag}),
Proposition \ref{ExtTor} yields that the groups $\E^2(H^0(F_S/K,A[\l^\infty])^\vee)$ and
$\E^1(H^1(F_S/K,A[\l^\infty])^\vee)$ are $\L(G)$-torsion. Hence $H^2_1$ is torsion as well.
\end{proof}

\begin{lem}\label{OcL4}
With notations and hypotheses as in Lemma \ref{OcL3}, $Coker(g_1)$ is
finitely ge\-ne\-ra\-ted as $\Z_\l$-module.
\end{lem}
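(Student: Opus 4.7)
The plan is to augment the diagram of Lemma \ref{OcL3} by extending the left and middle columns through the long exact Ext sequences of $X \iri Y \sri J$ and $X_v \iri Y_v \sri J_v$ (possible because $Y$ and the $Y_v$'s have projective dimension at most $1$ under the hypothesis), and then to run a diagram chase. The aim is to build $Coker(g_1)$ out of Ext groups of the $\Z_\l$-finitely generated modules $(A[\l^\infty]^\vee)_{\mathcal{H}}\simeq A[\l^\infty](K)^\vee$ and $(A[\l^\infty]^\vee)_{\mathcal{H}_v}\simeq A[\l^\infty](K_w)^\vee$, via \eqref{BasicIso} and \eqref{BasicIsoLoc}, so that Theorem \ref{Jann} produces the finite generation over $\Z_\l$.

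Concretely, from the long exact Ext sequences attached to $X \iri Y \sri J$ and its local analogs (using $\E^2(Y)=\E^2_v(Y_v)=0$), the enlarged diagram has as first column the tail
\[ \E^0(X)\to\E^1(J)\to\E^1(Y)\to\E^1(X)\to\E^2(J)\to 0, \]
and analogously for the second column after applying $\bigoplus_S\Ind_{G_v}^G$. A diagram chase on the resulting commutative diagram, combined with the surjection $f\colon Coker(h_1)\sri Coker(\bar g_1)$ of Lemma \ref{OcL3}, should produce an exact sequence of the form
\[ Coker(\alpha_1)\to Coker(g_1)\to Ker(f)\to 0, \]
where $\alpha_1\colon\E^1(J)\to\bigoplus_S\Ind_{G_v}^G\E^1_v(J_v)$ is the comparison map on the new top row. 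It therefore suffices to show that $Coker(\alpha_1)$ and $Ker(f)$ are finitely generated over $\Z_\l$.

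The isomorphism \eqref{BasicIso} identifies $\E^i(J)$ with $\E^{i+1}((A[\l^\infty]^\vee)_{\mathcal{H}})$, and $(A[\l^\infty]^\vee)_{\mathcal{H}}$ is a $\Z_\l$-finitely generated module of the form $\Z_\l^r\oplus\Delta$ with $\Delta$ finite; Theorem \ref{Jann} then makes $\E^1(J)$ and $\E^2(J)$ (which control the source of $\alpha_1$ and one piece of $Ker(f)$) finitely generated over $\Z_\l$, and the same holds locally for $\E^1_v(J_v)$ and $\E^2_v(J_v)$. The subtle point is that $\Ind_{G_v}^G N$ need not be $\Z_\l$-f.g.\ when $N$ is, so this alone does not bound the codomain of $\alpha_1$ or the relevant part of the localization sitting inside $Ker(f)$.

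This local-to-global issue is the main obstacle. To overcome it I would invoke the Jannsen spectral sequence of Lemma \ref{OcVe2Thm33}, both globally and locally, to identify $\E^1(X)$ and $\E^1_v(X_v)$ -- up to $\Z_\l$-f.g.\ corrections coming from the Ext's of $(A[\l^\infty]^\vee)_{\mathcal{H}}$ and $(A[\l^\infty]^\vee)_{\mathcal{H}_v}$ -- with subquotients of the Iwasawa cohomology modules $Z(K)=H^2_{Iw}(K,T_\l(A))$ and $Z(K_w)=H^2_{Iw}(K_w,T_\l(A))$ of Lemma \ref{DescrZ}. Under this identification, the map $h_1$ intertwines with the localization $Z(K)\to\bigoplus_S\Ind_{G_v}^G Z(K_w)$, and since $Z(K)$ is $\L(G)$-torsion by Lemma \ref{OcVe2Thm33}, a Poitou--Tate duality computation at finite level, passed to the inverse limit, should force the ``$\Ind$-part'' of the codomain of $\alpha_1$ (and the corresponding piece of $Ker(f)$) to lie in the image of the global object modulo a $\Z_\l$-f.g.\ correction, yielding the required finite generation of $Coker(g_1)$ over $\Z_\l$.
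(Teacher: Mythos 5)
Your plan to augment the diagram of Lemma~\ref{OcL3} and bound $Coker(g_1)$ by a diagram chase diverges from the paper's argument, and the divergence exposes a real gap. You correctly identify the fatal obstruction yourself: $\Ind_{G_v}^G N$ is not $\Z_\l$-finitely generated when $N$ is, so the codomain $\bigoplus_S\Ind_{G_v}^G\E^1_v(J_v)$ of your $\alpha_1$ (and likewise the pieces of $Ker(f)$ coming from $\bigoplus_S\Ind_{G_v}^G\E^1_v(X_v)$) cannot be controlled by Theorem~\ref{Jann} alone. Your proposed rescue --- using the Jannsen spectral sequence to identify $\E^1(X)$ and $\E^1_v(X_v)$ ``up to $\Z_\l$-f.g.\ corrections'' with ``subquotients'' of $Z(K)$ and $Z(K_w)$ and then ``a Poitou--Tate computation $\ldots$ should force'' the required bound --- does not actually produce an argument. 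It names the relevant objects without supplying the mechanism, and it is aimed at the wrong module: what is needed is an exact identification of $\E^1(Y)$ and $\E^1_v(Y_v)$, not an up-to-correction description of $\E^1(X)$ and $\E^1_v(X_v)$.

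The missing idea is the transpose-functor identification, which bypasses the $\Ind$ problem entirely. By Lemma~\ref{DescrZ} one has $Z(K)=H^2_{Iw}(K,T_\l(A))$; by \cite[Proposition 4.10]{OV}, $DZ(K)\simeq Y$; and since $Z(K)$ is $\L(G)$-torsion (Lemma~\ref{OcVe2Thm33}), \cite[Lemma 3.1]{OV} gives $\E^1(DZ(K))\simeq Z(K)$. Hence $\E^1(Y)\simeq H^2_{Iw}(K,T_\l(A))$, and similarly $\E^1_v(Y_v)\simeq H^2_{Iw}(K_w,T_\l(A))$. Under these isomorphisms $g_1$ becomes precisely the localization map on Iwasawa cohomology $\il_n H^2(F_S/F_n,T_\l(A))\to\bigoplus_S\Ind^G_{G_v}\il_n H^2(F_{v_n},T_\l(A))$, so the tail of the Poitou--Tate nine-term sequence at each finite level, passed to the inverse limit over $n,m$, identifies $Coker(g_1)$ with $\il_{n,m}H^0(F_S/F_n,(A[\l^m])')$, which is a $\Z_\l$-finitely generated module. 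Your proposal gestures at both the spectral sequence and Poitou--Tate but never makes the decisive step $\E^1(Y)\simeq Z(K)$, and without it the diagram chase you set up cannot be closed. You should also note that the range in \eqref{BasicIso} is stated for $i\geqslant 2$, so invoking it to identify $\E^1(J)$ is at best an unjustified extension (though it happens to hold since the middle term of the sequence defining $J$ is projective).
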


\begin{proof}
Lemma \ref{DescrZ} yields $Z(K)=H^2_{Iw}(K,T_\l(A))$ so, using \cite[Proposition 4.10]{OV}, one has
$DH^2_{Iw}(K,T_\l(A))\simeq Y$. Therefore $\E^1(DH^2_{Iw}(K,T_\l(A)))\simeq \E^1(Y)$.
Since $H^2_{Iw}(K,T_\l(A))$ is a $\L(G)$-torsion module, \cite[Lemma 3.1]{OV} implies
$\E^1(DH^2_{Iw}(K,T_\l(A))\simeq H^2_{Iw}(K,T_\l(A))$, i.e.,
\[ H^2_{Iw}(K,T_\l(A))\simeq \E^1(Y)  \]
(the same holds for the ``local'' modules).
The map $g_1$ of Lemma \ref{OcL3} then reads as
\[ g_1: \il n H^2(F_S/F_n,T_\l(A)) \to \bigoplus_S \Ind^G_{G_v} \il n H^2(F_{v_n},T_\l(A)) \ . \]
The claim follows from the Poitou-Tate sequence (see \cite[8.6.10 p. 488]{NSW}), since
\[ Coker(g_1)\simeq \il {n,m} H^0(F_S/F_n,(A[\l^m])') \ . \]
\end{proof}

\section{The main Theorem}\label{SecMain}
We are now ready to prove the following

\begin{thm}\label{NoPsNThm}
Let $G=\Gal(K/F)$ be an $\l$-adic Lie group without elements of order $\l$ and of
positive dimension $d\geqslant 3$. If
$H^2(F_S/K, A[\l^\infty])=0$ and the map $\psi$ in the sequence
\begin{equation}\label{Seqpsi} Sel_A(K)_\l \iri H^1(F_S/K, A[\l^\infty])
\stackrel{\psi}{\longrightarrow} \bigoplus_S \Coind_G^{G_v} H^1(K_w, A)[\l^\infty]
\end{equation}
is surjective, then $\mathcal{S}:=Sel_A(K)_\l^\vee$ has no nontrivial pseudo-null submodule.
\end{thm}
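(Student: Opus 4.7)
The plan is to verify the sufficient criterion $\E^i\E^i(\mathcal{S})=0$ for all $i\geqslant 2$ recorded after Definition \ref{DefPsN} (see \cite[Proposition 3.5 (i)(c)]{Vj}).

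First, the surjectivity of $\psi$ in \eqref{Seqpsi} dualizes to the injectivity of $\psi^\vee$, and combined with the identification of $\mathcal{S}$ as $\mathrm{coker}(\psi^\vee)$ recorded just before Theorem \ref{Jann}, it yields the short exact sequence
\begin{equation*}
0\to \bigoplus_S \Ind^G_{G_v} X_v \xrightarrow{\psi^\vee} X \to \mathcal{S}\to 0.
\end{equation*}
Applying $\E^\bullet=\Ext^\bullet_{\L(G)}(-,\L(G))$ and using the identity $\E^i(\Ind^G_{G_v}X_v)=\Ind^G_{G_v}\E^i_v(X_v)$ (\cite[Lemma 5.5]{OV}) then produces a long exact sequence
\begin{equation*}
\cdots\to \E^i(X)\to \bigoplus_S \Ind^G_{G_v}\E^i_v(X_v)\to \E^{i+1}(\mathcal{S})\to \E^{i+1}(X)\to\cdots
\end{equation*}
which I want to feed into a second round of $\Ext$.

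Second, I would pin down the global side via Corollary \ref{JannCor}(1): for $i\geqslant 2$, $\E^i(X)$ is a free $\Z_\l$-module of finite rank if $i=d-2$, a finite $\Z_\l$-module if $i=d-1$, and zero otherwise. Consequently the long exact sequence exhibits $\E^{i+1}(\mathcal{S})$ as an extension of a submodule of $\E^{i+1}(X)$ by a quotient of $\bigoplus_S \Ind^G_{G_v}\E^i_v(X_v)$. By Theorem \ref{Jann}, the Ext of any finitely generated $\Z_\l$-module is concentrated in degrees $d$ (for the free part) or $d+1$ (for the finite part), so $\E^j$ annihilates $\E^i(X)$ for every $j$ in the diagonal range $2\leqslant j=i\leqslant d-1$; this disposes of the global contribution on a second application of $\E^\bullet$.

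Third, for the local pieces the isomorphism \eqref{BasicIsoLoc} rewrites $\E^i_v(X_v)$ ($i\geqslant 2$) in terms of $(A[\l^\infty]^\vee)_{\mathcal{H}_v}$, a finitely generated $\Z_\l$-module, and Corollary \ref{JannCor}(2) then gives $\E^i_v\E^{i-1}_v(X_v)=0$ for $i\geqslant 3$. Lemma \ref{OcL3} transfers this vanishing from the local modules to the global quotients appearing in $\E^\bullet(\mathcal{S})$, reducing the control of $\E^j\E^j(\mathcal{S})$ to that of the residual term $\mathrm{coker}(g_1)$; by Lemma \ref{OcL4} this is again finitely generated as $\Z_\l$-module, so Theorem \ref{Jann} forces its Ext to sit only in the top degrees and it is killed by a second application of $\E^j$ in our range. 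Assembling the two sides, $\E^i\E^i(\mathcal{S})$ vanishes for $2\leqslant i\leqslant d-1$, and for $i\geqslant d$ both terms trivially vanish because $\L(G)$ has global dimension $d+1$ and $\E^i(X)$ is already known.

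The main obstacle is the bookkeeping at the index $i=d-1$, where the ``finite'' contribution $\E^{d-1}(X)$ and the $\Z_\l$-finitely generated cokernel $\mathrm{coker}(g_1)$ both enter $\E^{d-1}(\mathcal{S})$; after applying $\E^{d-1}$ once more, one must check that the resulting degree-$(d+1)$ debris does not leak back into $\E^{d-1}\E^{d-1}(\mathcal{S})$, and this is exactly where the diagram of Lemma \ref{OcL3} and the precise tracking of images and cokernels are essential. The hypothesis $d\geqslant 3$ enters precisely here: it guarantees that both indices $d-2$ and $d-1$ fall in the regime $i\geqslant 2$ covered by Corollary \ref{JannCor}(1), so every $\Z_\l$-finitely generated contribution is annihilated by a second $\Ext$. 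The boundary case $d=2$ escapes this mechanism, which is why Proposition \ref{dimcm2} requires the additional local hypothesis $\cd_\l(G_v)=2$ to kill the remaining obstruction directly.
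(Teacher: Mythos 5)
Your overall strategy coincides with the paper's: verify $\E^i\E^i(\mathcal{S})=0$ for all $i\geqslant 2$ by dualizing the surjection $\psi$, running the long exact $\Ext$-sequence, and controlling the global and local contributions via Jannsen's theorem and the powerful-diagram apparatus. However, the sketch misplaces the genuine difficulty and, as a result, misdiagnoses the role of the hypothesis $d\geqslant 3$, leaving a real gap.

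The critical index is $i=2$, not $i=d-1$. For $i\geqslant 3$ the long exact sequence
\[
\E^{i-1}(X)\to \bigoplus_S \Ind^G_{G_v}\E^{i-1}_v(X_v)\to \E^{i}(\mathcal{S})\to \E^{i}(X)
\]
can be analyzed entirely through Corollary \ref{JannCor}, since both $\E^{i-1}(X)$ (via part {\bf 1}, as $i-1\geqslant 2$) and the local terms (via part {\bf 2}) are under control; the paper's four sub-cases are just a careful case split on which of $\E^{i-1}(X),\E^{i}(X)$ vanish. But at $i=2$ the left-hand term is $\E^1(X)$, which lies \emph{outside} the range covered by Corollary \ref{JannCor}(1), and neither does \eqref{BasicIsoLoc} tell you anything about $\E^1_v(X_v)$. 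This is precisely why Lemma \ref{OcL3} and Lemma \ref{OcL4} are invoked: the commutative diagram does not ``transfer a vanishing'' as you describe, it replaces the uncontrolled map $h_1$ on the $\E^1$-level by the map $\bar g_1$ on the $\E^2$-level (through the ladder $\E^1(Y)\to\E^1(X)\to\E^2(J)$), and $\E^1(Y)$ is in turn identified with the torsion Iwasawa module $H^2_{Iw}(K,T_\l(A))$ whose cokernel along $g_1$ is $\Z_\l$-finitely generated by Poitou--Tate. One then has to show $\E^2(\mathrm{Coker}(\bar g_1))=0$ (which needs the analysis of $\D=\bar g_1(\E^2(J))$) and $\E^2(\mathrm{Ker}(f))=0$; both, together with $\E^2\E^2(X)=0$, force $\E^2\E^2(\mathcal{S})=0$. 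Your sketch does not engage with this chain of reductions.

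Consequently your explanation of why $d\geqslant 3$ is needed is also off. It is not that ``$d-2$ and $d-1$ fall in the range $i\geqslant 2$''; those indices are handled uniformly in the $i\geqslant 3$ cases (and for $d=3$ the index $d-1=2$ is exactly the hard case, so there is nothing automatic to gain there). The hypothesis $d\geqslant 3$ is used to guarantee $\E^2(\mathrm{Ker}(f))=0$: $\mathrm{Ker}(f)$ is finitely generated over $\Z_\l$ (a quotient of $\mathrm{Coker}(g_1)$, Lemma \ref{OcL4}), so by Theorem \ref{Jann} its $\Ext$'s sit in degrees $d$ and $d+1$, and applying $\E^2$ to it kills it only when $2<d$. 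This is made explicit in Remark \ref{d=1,2}, which isolates $\E^2(\mathrm{Ker}(f))$ as the sole obstruction for $d=1,2$. Finally, in the $i=2$ case one must still deal separately with the tail $\E^2(\mathcal{S})\to\E^2(X)$ when $d=3,4$ (where $\E^2(X)$ is nonzero finite, respectively free) versus $d\geqslant 5$ (where it vanishes); your argument compresses this into one sentence and would need to be expanded before it constitutes a proof.
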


\begin{proof}
We need to prove that
\[\E^i\E^i(\mathcal{S})=0\ \ \forall\,i\geqslant 2 \ ,\]
and we consider two cases.

\noindent\underline{\bf Case $i=2$.}
Let $\D:= \bar{g}_1(\E^2(J))$. Then
\[ Coker(\bar{g}_1)= \bigoplus_S \Ind^G_{G_v}\E^2(J_v)/\D \ . \]
Observe that $\D\simeq  \bar{g}_1 (\E^3(A[\l^\infty]^\vee_{\H}))$
is a finitely generated $\Z_\l$-module (it is zero if
$d\neq 3$ and free as $\Z_\l$-module if $d=3$), so $\E^1(\D)=0$. Even if the theorem is limited to $d\geqslant 3$ we
remark here that, for $d=2$, $\D$ is finite and, for $d=1$, $\D=0$: hence $\E^1(\D)=0$ in any case.\\
Moreover
\begin{align*}
\E^2(\bigoplus_S \Ind^G_{G_v}\E^2(J_v))& =\E^2(\bigoplus_S \Ind^G_{G_v}\E^3(A[\l^\infty]^\vee_{\H_v}))\\
 & = \bigoplus_S \Ind^G_{G_v}\E^2\E^3(A[\l^\infty]^\vee_{\H_v})=0\ ,
\end{align*}
so, taking $\Ext$ in the sequence,
\begin{equation}\label{Seqfin}
 \D \iri  \bigoplus_S \Ind^G_{G_v}\E^2(J_v) \sri \bigoplus_S \Ind^G_{G_v}\E^2(J_v)/\D \ ,
\end{equation}
one finds
\begin{equation*}
\E^1(\D) \to  \E^2(\bigoplus_S \Ind^G_{G_v}\E^2(J_v)/\D) \to \E^2(\bigoplus_S \Ind^G_{G_v}\E^2(J_v)) \ .
\end{equation*}
Therefore
\begin{equation}\label{SeqQuot} \E^2(\bigoplus_S \Ind^G_{G_v}\E^2(J_v)/\D)=0 \ .
\end{equation}
Recall the sequences
\begin{equation}\label{SeqA}
\bigoplus_S \Ind^G_{G_v}X_v \iri X \sri \mathcal{S} \end{equation}
\begin{equation}\label{SeqB}
Ker(f) \iri Coker(h_1) \sri Coker(\bar{g}_1) \end{equation}
provided (respectively) by the hypothesis on $\psi$ and by Lemma \ref{OcL3}.
Take $\Ext$ on \eqref{SeqA} to get
\[  \E^1(X) \stackrel{h_1}{\longrightarrow} \E^1(\bigoplus_S \Ind^G_{G_v}X_v) \to  \E^2(\mathcal{S}) \to \E^2(X) \ .\]
If $d\geqslant 5$, then $\E^2(X)\simeq\E^3(J)\simeq\E^4(A[\l^\infty]^\vee_{\H})=0$. When this is the case
$Coker(h_1) \simeq \E^2(\mathcal{S})$ and sequence \eqref{SeqB} becomes
\[ Ker(f) \iri \E^2(\mathcal{S}) \sri \bigoplus_S \Ind^G_{G_v}\E^2(J_v)/\D \ .\]
By Lemma \ref{OcL4}, $Ker(f)$ is a finitely generated $\Z_\l$-module. Taking $\Ext$, one has
\[ \E^2(\bigoplus_S \Ind^G_{G_v}\E^2(J_v)/\D) \to \E^2\E^2(\mathcal{S}) \to \E^2(Ker(f)) \ ,\]
where the first and third term are trivial, so $\E^2\E^2(\mathcal{S})=0$ as well.\\
We are left with $d=3,4$. We know that $\E^4(A[\l^\infty]^\vee_{\H})=\E^2(X)$ is free over $\Z_\l$ if $d=4$
or finite if $d=3$ (again we remark it is 0 if $d=1,2$). Anyway $\E^2\E^2(X)=0$ in all cases. From the sequence
\[ Coker(h_1) \iri \E^2(\mathcal{S}) \stackrel{\eta}{\longrightarrow} \E^2(X) \]
one writes
\begin{equation}\label{C} Coker(h_1) \iri \E^2(\mathcal{S}) \sri Im(\eta) \end{equation}
where $Im(\eta)$ is free over $\Z_\l$ if $d=4$ or finite if $d=3$.\\
Taking $\Ext$ in \eqref{SeqB} one has
\[ \E^2(Coker(\bar{g}_1)) \to \E^2(Coker(h_1)) \to \E^2(Ker(f)) \]
with the first (see equation \eqref{SeqQuot}) and third term equal to zero, so $\E^2(Coker(h_1))=0$.
This fact in sequence \eqref{C} implies
\[ 0=\E^2(Im(\eta)) \to \E^2\E^2(\mathcal{S}) \to \E^2(Coker(h_1))=0 \ ,\]
so $\E^2\E^2(\mathcal{S})=0$.

\noindent\underline{\bf Case $i\geqslant 3$.}
From sequence \eqref{SeqA} we get the following
\begin{equation}\label{SeqThm}
\E^{i+1}(A[\l^\infty]_\H^\vee)\simeq\E^{i-1}(X)\to \bigoplus_S\Ind_{G_v}^G\E_v^{i-1}(X_v)\to \E^i(\mathcal{S})\to
\E^i(X)\simeq\E^{i+2}(A[\l^\infty]_\H^\vee) \ .
\end{equation}
We have four cases, depending on whether $\E^{i-1}(X)$ and $\E^i(X)$ are trivial or not.

\noindent{\bf Case 1.} Assume $\E^{i-1}(X)=\E^i(X)=0$.\\
From \eqref{SeqThm} we obtain the isomorphism
\[ \bigoplus_S\Ind_{G_v}^G\E_v^{i-1}(X_v)\simeq \E^i(\mathcal{S})\ , \]
so
\[ \bigoplus_S\Ind_{G_v}^G\E_v^i\E_v^{i-1}(X_v)\simeq \E^i\E^i(\mathcal{S})=0\]
thanks to Corollary \ref{JannCor} part {\bf 2}. We remark that this is the only case to consider when $d=1,2$.

\noindent{\bf Case 2.} Assume $\E^{i-1}(X)=0$ and $\E^i(X)\neq 0$.\\
This happens when $i=d-2$ or $i=d-1$ and $A[\l^\infty]^\vee_{\mathcal{H}}$ is finite.
From \eqref{SeqThm} we have
\[\bigoplus_S\Ind_{G_v}^G \E_v^{d-3}\iri \E^{d-2}(\mathcal{S})\sri N\]
\[(\ {\mathrm{resp.}}\quad \bigoplus_S\Ind_{G_v}^G \E_v^{d-2}\iri \E^{d-1}(\mathcal{S})\sri N\ )\]
where $N$ is a submodule of the free module $\E^{d-2}(X)$ (resp. of the finite module $\E^{d-1}(X)$). Therefore
$\E^{d-2}(N)=0$ (resp. $\E^{d-1}(N)=0$) and, moreover, $\E_v^{d-2}\E_v^{d-3}(X_v)=0$ (resp. $\E_v^{d-1}\E_v^{d-2}(X_v)=0$)
by Corollary \ref{JannCor} part {\bf 2}. Hence
$\E^{d-2}\E^{d-2}(\mathcal{S})=0$ (resp. $\E^{d-1}\E^{d-1}(\mathcal{S})=0$).

\noindent{\bf Case 3.} Assume $\E^{i-1}(X)\neq0$ and $\E^i(X)=0$.\\
This happens when $i=d$ or $i=d-1$ and $A[\l^\infty]^\vee_{\mathcal{H}}$ is free. The sequence \eqref{SeqThm} gives
\[ N\to \bigoplus_S\Ind_{G_v}^G\E_v^{d-1}(X_v)\sri \E^d(\mathcal{S}) \]
\[ (\ {\mathrm{resp.}}\quad N\to \bigoplus_S\Ind_{G_v}^G\E_v^{d-2}(X_v)\sri \E^{d-1}(\mathcal{S})\ )\]
where now $N$ is a quotient of the finite module $\E^{d-1}(X)$ (resp. of the free module $\E^{d-2}(X)\,$). Then
$\E^d(N)=0$ (resp. $\E^{d-1}(N)=0$) and
\[ \bigoplus_S\Ind_{G_v}^G\E_v^d\E_v^{d-1}(X_v)\simeq \E^d\E^d(\mathcal{S})=0  \]
\[(\ {\mathrm{resp.}}\quad \bigoplus_S\Ind_{G_v}^G\E_v^{d-1}\E_v^{d-2}(X_v)\simeq \E^{d-1}\E^{d-1}(\mathcal{S})=0\ )\ . \]

\noindent{\bf Case 4.} Assume $\E^{i-1}(X)\neq0$ and $\E^i(X)\neq 0$.\\
This happens when $i=d-1$ and $A[\l^\infty]^\vee_{\mathcal{H}}$ has nontrivial rank and torsion.
From sequence \eqref{SeqThm} we have
\[ \E^{d-2}(X)\to\bigoplus_S\Ind_{G_v}^G\E_v^{d-2}(X_v)\to \E^{d-1}(\mathcal{S})\to \E^{d-1}(X)\ . \]
Let $N_1,N_2$ and $N_3$ be modules such that:
\begin{itemize}
\item[-] $N_1$ is a quotient of $\E^{d-2}(X)$ (which is torsion free so that $\E^{d-2}(N_1)=0$);
\item[-] $N_2$ is a submodule of $\E^{d-1}(X)$ (which is finite so that $\E^{d-1}(N_2)=0$);
\item[-] $N_3$ is a module such that the sequences
\[ N_1\iri \bigoplus_S\Ind_{G_v}^G\E_v^{d-2}(X_v) \sri N_3\qquad
\mathrm{and}\qquad N_3\iri \E^{d-1}(\mathcal{S})\sri N_2 \]
 are exact.
\end{itemize}
Applying the functor $\Ext$ we find
\[ \E^{d-2}(N_1)\to \E^{d-1}(N_3)\to \bigoplus_S\Ind_{G_v}^G\E_v^{d-1}\E_v^{d-2}(X_v) \]
(which yields $\E^{d-1}(N_3)=0$), and
\[ \E^{d-1}(N_2)\to\E^{d-1}\E^{d-1}(\mathcal{S})\to\E^{d-1}(N_3) \]
which proves $\E^{d-1}\E^{d-1}(\mathcal{S})=0$.
\end{proof}

\begin{rem}\label{d=1,2}
{\em As pointed out in various steps of the previous proof, most of the statements
still hold for $d=1,2$. The only missing part is $\E^2(Ker(f))=0$ for $i=2$, in that case only our
calculations to get $\E^2\E^2(\mathcal{S})=0$ fail. In particular the same proof shows that $\E^2\E^2(\mathcal{S})=0$
when $Ker(f)$ is free and $d=1$ or when $Ker(f)$ is finite and $d=2$ or, obviously, for any $d$ if $f$ is injective.}
\end{rem}

We can extend the previous result to the $d\geqslant 2$ case with some extra assumptions.

\begin{prop}\label{dimcm2}
Let $G=\Gal(K/F)$ be an $\l$-adic Lie group without elements of order $\l$ and of
dimension $d\geqslant 2$. If $H^2(F_S/K, A[\l^\infty])=0$ and
$\cd_\l(G_v)=2$ for any $v\in S$, then $Sel_A(K)_\l^\vee$ has no nontrivial pseudo-null submodule. 
\end{prop}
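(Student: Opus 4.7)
The plan is to mimic the proof of Theorem \ref{NoPsNThm}, with the hypothesis $\cd_\l(G_v)=2$ playing the role both of the surjectivity of $\psi$ and of allowing one to reach down to $d=2$. As before, the goal is to verify $\E^i\E^i(\mathcal{S})=0$ for every $i\geqslant 2$.

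First I would extract a strong local vanishing from the new hypothesis. Since $G_v$ has no $\l$-torsion, $\cd_\l(G_v)=2$ forces $\dim G_v=2$, and hence $\L(G_v)$ has global dimension $3$. Applied to the finitely generated $\Z_\l$-module $(A[\l^\infty]^\vee)_{\H_v}$, Theorem \ref{Jann} gives $\E_v^j((A[\l^\infty]^\vee)_{\H_v})=0$ for $j\notin\{2,3\}$, and combining with \eqref{BasicIsoLoc} one concludes $\E_v^i(X_v)=0$ for every $i\geqslant 2$, whence $\E^i(\bigoplus_S \Ind_{G_v}^G X_v)=0$ for $i\geqslant 2$ by \cite[Lemma 5.5]{OV}.

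Since $\psi$ need not be surjective, I would split the right exact sequence $\bigoplus_S \Ind_{G_v}^G X_v \stackrel{\psi^\vee}{\longrightarrow} X \to \mathcal{S}\to 0$ into two short exact sequences $I\iri X\sri \mathcal{S}$ and $M\iri \bigoplus_S \Ind_{G_v}^G X_v\sri I$, with $I:=Im(\psi^\vee)$ and $M:=Ker(\psi^\vee)$. The $\Ext$ long exact sequence of the second one, together with the local vanishing above, yields $\E^i(I)\simeq \E^{i-1}(M)$ for $i\geqslant 3$ and that $\E^2(I)$ is a quotient of $\E^1(M)$. Feeding this into the $\Ext$ long exact sequence of $I\iri X\sri \mathcal{S}$ realises $\E^i(\mathcal{S})$ as an extension of a submodule of $\E^i(X)$ by a quotient of $\E^{i-1}(I)$. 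Using \eqref{BasicIso} and Theorem \ref{Jann} applied to $(A[\l^\infty]^\vee)_{\H}$, $\E^i(X)$ is concentrated in the degrees $i\in\{d-2,d-1\}$ (free and finite respectively, for $i\geqslant 2$), which lets one run the same case split on $i$ and on the structure of $\E^*(X)$ as in the four cases of Theorem \ref{NoPsNThm}.

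The main obstacle is to control $M$ (whose Pontrjagin dual is $Coker(\psi)$) tightly enough for the iterated $\E^i$ argument to close, particularly in the delicate low dimension $d=2$ where, by Remark \ref{d=1,2}, the earlier proof failed at the step $\E^2(Ker(f))=0$ because $\E^1$ of a finite $\Z_\l$-module need not vanish. Here the vanishing from the first step is the key compensation: the pieces of the $\Ext$ long exact sequence that used to come from local terms now come from $\E^*(M)$, which, being $\L(G)$-torsion by Proposition \ref{ExtTor} and fitting into the Poitou--Tate style sequence exploited in Lemma \ref{OcL4}, has enough structure for a Corollary \ref{JannCor}-style argument to force $\E^i\E^i(\mathcal{S})=0$ in all the remaining cases.
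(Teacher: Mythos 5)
The key observation you missed is that $\cd_\l(G_v)=2$ forces much more than the vanishing of $\E_v^i(X_v)$ for $i\geqslant 2$: it forces $X_v=0$ outright. Indeed, $\cd_\l(\Gal(\ov{F_v}/F_v))=2$ by \cite[Theorem 7.1.8]{NSW}, and since $\cd_\l$ is finite the absolute local Galois group has no $\l$-torsion, hence neither does its closed subgroup $\Gal(\ov{F_v}/K_w)$. Because the quotient $G_v$ already exhausts the $\l$-cohomological dimension (both equal $2$), the kernel $\Gal(\ov{F_v}/K_w)$ has $\cd_\l=0$ (equivalently, the $\l$-Sylow subgroup of $\Gal(\ov{F_v}/F_v)$ maps injectively onto its image in $G_v$; see \cite[Theorem 7.5.3]{NSW}). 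Consequently $H^1(K_w,A[\l^\infty])=0$, i.e.\ $X_v=0$ for every $v\in S$, so $\psi^\vee=0$ and $\mathcal{S}\simeq X$. The paper then finishes immediately: $X$ embeds in $Y$, and since $H^2(F_S/K,A[\l^\infty])=0$ the module $Y$ has projective dimension $\leqslant 1$, hence no nontrivial pseudo-null submodule (\cite[Proposition 2.5]{OV}), and the same is inherited by the submodule $X=\mathcal{S}$.

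Your route, by contrast, is not complete. Jannsen's theorem together with \eqref{BasicIsoLoc} only controls $\E_v^i(X_v)$ for $i\geqslant 2$ and says nothing about $\E_v^1(X_v)$ or about $X_v$ itself; these low-degree local terms, together with $\E^*(M)$ for $M=Ker(\psi^\vee)$, are exactly what enter the long exact sequences you set up, and your final paragraph asserts rather than proves that they can be tamed. In particular the delicate step in dimension $d=2$ — the one Remark \ref{d=1,2} flags as the obstruction to extending Theorem \ref{NoPsNThm} — is not actually resolved by your argument: you would still need to show $\E^2\E^2(\mathcal{S})=0$, and the objects $\E^1(M)$, $\E_v^1(X_v)$ you are left with are not pinned down by anything you've established. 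The strengthened local vanishing $X_v=0$ is not an optional simplification here; it is what makes the proposition provable without the surjectivity of $\psi$, and without it your case analysis does not close.
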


\begin{proof}
Since $\cd_\l(F_v)=2$ (by \cite[Theorem 7.1.8]{NSW}), our hypothesis implies that $\Gal(\ov{F_v}/K_w)$ has no
elements of order $l$ (see also \cite[Theorem 7.5.3]{NSW}). Hence $H^1(K_w,A[\l^\infty])^\vee=0$ and
$Sel_A(K)_\l^\vee \simeq X$ embeds in $Y$. Now $H^2(F_S/K,A[\l^\infty])=0$ yields $Y$ has projective dimension
$\leqslant 1$, so $Y$ has no nontrivial pseudo-null submodule (by \cite[Proposition 2.5]{OV}).
\end{proof}

\subsection{The hypotheses on $H^2(F_S/K, A[\l^\infty])$ and $\psi$}
Let $F_m$ be extensions of $F$ such that $\Gal(K/F)\simeq \displaystyle{\il m \Gal(F_m/F)}$.
To provide some cases in which the main hypotheses hold we consider the
Poitou-Tate sequence for the module $A[\l^n]$, from which one can extract the sequence
\begin{equation}\label{CasSel}   \begin{xy}
(-15,0)*+{0}="v1"; (5,0)*+{Ker(\psi_{m,n})}="v2"; (40,0)*+{H^1(F_S/F_m, A[\l^n])}="v3";
(87,0)*+{\prod_{\begin{subarray}{c} v_m|v \\ v\in S\end{subarray}} H^1(F_{v_m}, A[\l^n])}="v4";
(87,-15)*+ {Ker(\psi^t_{m,n}))^\vee}="v5";(47,-15)*+{H^2(F_S/F_m, A[\l^n])}="v6";
(3,-15)*+{\prod_{\begin{subarray}{c} v_m|v \\ v\in S \end{subarray}}H^2(F_{v_m}, A[\l^n])}="v7";
(3,-30)*+{H^0(F_S/F_m,A^t[\l^n])^\vee}="v8";(33,-30)*+{0}="v9";
{\ar@{->}"v1";"v2"};{\ar@{->}"v2";"v3"};{\ar@{->}^<<<<<{\psi_{m,n}}"v3";"v4"};
{\ar@{->}^{\phi_{m,n}}"v4";"v5"};{\ar@{->}"v5";"v6"};{\ar@{->}"v6";"v7"};
{\ar@{->}"v7";"v8"};{\ar@{->}"v8";"v9"};
\end{xy} \end{equation}
(where $\psi_{m,n}^t$ is the analogue of $\psi_{m,n}$ for the dual abelian variety $A^t\,$, i.e., their kernels
represent the Selmer groups over $F_m$ for the modules $A^t[\l^n]$ and $A[\l^n]$ respectively). Taking direct limits on $n$
and recalling that $H^2(F_{v_m}, A[\l^\infty])=0$, the sequence \eqref{CasSel} becomes
\begin{equation}\label{Cassels} \begin{xy}
(-15,0)*+{0}="v1"; (5,0)*+{Sel_A(F_m)_\l}="v2"; (40,0)*+{H^1(F_S/F_m, A[\l^\infty])}="v3";
(85,0)*+{\prod_{\begin{subarray}{c} v_m|v \\ v\in S \end{subarray}}H^1(F_{v_m}, A[\l^\infty])}="v4";
(85,-15)*+{(\displaystyle{\il n Ker(\psi^t_{m,n})})^\vee}="v5";(40,-15)*+{H^2(F_S/F_m, A[\l^\infty])}="v6";
(13,-15)*+{0}="v7";
{\ar@{->}"v1";"v2"};{\ar@{->}"v2";"v3"};{\ar@{->}^<<<<{\psi_m}"v3";"v4"};{\ar@{->}^{\phi_m}"v4";"v5"};
{\ar@{->}"v5";"v6"};{\ar@{->}"v6";"v7"};
\end{xy}\end{equation}
(for more details one can consult \cite[Chapter 1]{CS}). One way to prove that $H^2(F_S/K, A[\l^\infty])=0$
and $\psi$ is surjective is to show that $\displaystyle{(\il n Ker(\psi^t_{m,n}))^\vee=0}$ for any $m$.
We mention here two cases in which the hypothesis on the vanishing of
$H^2(F_S/K, A[\l^\infty])$ is verified. The following is basically \cite[Proposition 1.9]{CS}.

\begin{prop}\label{PropH2Zero} Let $F_m$ be as above and assume that
$|Sel_{A^t}(F_m)_\l|<\infty$ for any $m$, then
\[H^2(F_S/K, A[\l^\infty])=0 \ .\]
\end{prop}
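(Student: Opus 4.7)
The plan is to extract the vanishing of $H^2(F_S/K,A[\l^\infty])$ from the Cassels dual sequence \eqref{Cassels} applied at each layer $F_m$, and then to pass to the direct limit.

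The tail of \eqref{Cassels} identifies
\[ H^2(F_S/F_m,A[\l^\infty])\simeq\mathrm{Coker}(\phi_m) \]
as a quotient of $\bigl(\il{n}Sel_{A^t}(F_m)_{\l^n}\bigr)^\vee$. Hence it is enough to prove that, under the assumption $|Sel_{A^t}(F_m)_\l|<\infty$, the inverse limit $\il{n}Sel_{A^t}(F_m)_{\l^n}$ vanishes, the transitions being induced by $A^t[\l^{n+1}]\stackrel{\l}{\to}A^t[\l^n]$.

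Since Selmer groups commute with direct limits, $Sel_{A^t}(F_m)_\l=\dl{n}Sel_{A^t}(F_m)_{\l^n}$, and the hypothesis furnishes an integer $k$ with $\l^k\cdot Sel_{A^t}(F_m)_\l=0$. Chasing the Kummer-theoretic exact sequence for the pair $A^t[\l^n]\hookrightarrow A^t[\l^\infty]$ shows that, for $n\geq k$, the tower $\bigl(Sel_{A^t}(F_m)_{\l^n}\bigr)_n$ consists of finite groups of uniformly bounded size, and that the transition map $Sel_{A^t}(F_m)_{\l^{n+1}}\to Sel_{A^t}(F_m)_{\l^n}$ becomes multiplication by $\l$ modulo a bounded $A^t(F_m)[\l^\infty]$-contribution. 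Iterating this transition eventually kills every element, so
\[ \il{n}Sel_{A^t}(F_m)_{\l^n}\;=\;T_\l\bigl(Sel_{A^t}(F_m)_\l\bigr)\;=\;0\,, \]
with the Mittag-Leffler condition holding trivially because all groups in sight are finite.

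Consequently $H^2(F_S/F_m,A[\l^\infty])=0$ for every $m$, and the claim follows from
\[ H^2(F_S/K,A[\l^\infty])\;=\;\dl{m}H^2(F_S/F_m,A[\l^\infty])\;=\;0\,. \]
The main technical obstacle I expect is the careful identification of the inverse-limit transition maps with the multiplication-by-$\l$ maps on the $Sel_{A^t}(F_m)_{\l^n}$; this is a direct Kummer-theoretic computation, but it is the step most prone to convention-related slip-ups, and it is also where one has to absorb the (harmless, because finite) contribution of $A^t(F_m)[\l^\infty]$ into the Tate-module argument.
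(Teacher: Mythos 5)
Your computation of $\il{n} Sel_{A^t}(F_m)_{\l^n}$ is incorrect, and this is exactly the delicate point you flagged at the end. The Kummer sequence gives
\[ A^t(F_m)/\l^n A^t(F_m) \;\iri\; Sel_{A^t}(F_m)_{\l^n} \;\sri\; \ts(A^t/F_m)[\l^n]\ , \]
and on the left-hand piece the transition maps induced by $A^t[\l^{n+1}]\stackrel{\l}{\to}A^t[\l^n]$ are the \emph{natural projections} $A^t(F_m)/\l^{n+1}\to A^t(F_m)/\l^n$, not multiplication by $\l$. Once $Sel_{A^t}(F_m)_\l$ is finite, the Mordell--Weil group $A^t(F_m)$ is finite, and these projections stabilize to isomorphisms onto the $\l$-primary part of $A^t(F_m)$. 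Passing to the limit (with Mittag--Leffler on the left, and $T_\l(\ts)=0$ on the right) yields
\[ \il{n} Sel_{A^t}(F_m)_{\l^n}\;\simeq\; A^t(F_m)^* \;=\; A^t(F_m)[\l^\infty]\ , \]
which is in general a nonzero finite group. It is not equal to $T_\l(Sel_{A^t}(F_m)_\l)$: the two inverse systems are different, and the $A^t(F_m)[\l^\infty]$-contribution you acknowledge is not absorbed by iteration --- it survives into the limit. So your argument only shows that $H^2(F_S/F_m,A[\l^\infty])$ is a quotient of the \emph{finite} group $\bigl(A^t(F_m)^*\bigr)^\vee$, not that it vanishes.

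What is missing is the second half of the paper's proof, which is the function-field--specific divisibility trick. From the exact sequence $A[\l]\iri A[\l^\infty]\stackrel{\l}{\sri}A[\l^\infty]$ and the fact that $\cd_\l(\Gal(F_S/F_m))=2$ (so $H^3(F_S/F_m,A[\l])=0$), one sees that $H^2(F_S/F_m,A[\l^\infty])$ is $\l$-divisible. Combined with the finiteness established via the Cassels--Poitou--Tate sequence, this forces $H^2(F_S/F_m,A[\l^\infty])=0$ for every $m$, and then the direct limit gives the claim. Your reduction to the Cassels sequence and the final passage to the limit are the same as the paper's; you need to replace the false vanishing of the inverse limit by the ``finite plus divisible implies zero'' argument to close the gap.
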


\begin{proof}
From \cite[Chapter I Remark 3.6 ]{Mi1} we have the isomorphism
\[ A^t(F_{v_m})^*\simeq H^1(F_{v_m},A[\l^\infty])^\vee \ , \]
where $A^t(F_{v_m})^*\simeq \displaystyle {\il n A^t(F_{v_m})/\l^nA^t(F_{v_m})}\,$.\\
Taking inverse limits on $n$ in the exact sequence
\[ A^t(F_m)/\l^nA^t(F_m) \iri Ker(\psi^t_{m,n}) \sri \ts(A^t/F_m)[\l^n] \ ,\]
and noting that $|\ts(A^t/F_m)[\l^\infty]| < \infty$ yields $T_\l(\ts(A^t/F_m))=0$, we find
\[ A^t(F_m)^* \simeq \il n Ker(\psi^t_{m,n}) \ . \]
Therefore \eqref{Cassels} becomes
\begin{equation}\label{CasselsMod} \begin{xy}
(-15,0)*+{0}="v1"; (5,0)*+{Sel_A(F_m)_\l}="v2"; (40,0)*+{H^1(F_S/F_m, A[\l^\infty])}="v3";
(85,0)*+{\prod_{\begin{subarray}{c} v_m|v \\ v\in S \end{subarray}} (A^t(F_{v_m})^*)^\vee}="v4";
(85,-15)*+{(A^t(F_m)^*)^\vee}="v5";(47,-15)*+{H^2(F_S/F_m, A[\l^\infty])}="v6";
(17,-15)*+{0}="v7";
{\ar@{->}"v1";"v2"};{\ar@{->}"v2";"v3"};{\ar@{->}^<<<<<{\psi}"v3";"v4"};{\ar@{->}^{\widetilde{\phi}}"v4";"v5"};
{\ar@{->}"v5";"v6"};{\ar@{->}"v6";"v7"};
\end{xy}\end{equation}
\begin{comment}
The dual map
\[ \widetilde{\phi}\,^\vee: A^t(F_m)^* \to \prod_{\begin{subarray}{c} v_m|v \\ v\in S
\end{subarray}} A^t(F_{v_m})^* \]
is obviously injective, hence $\widetilde{\phi}$ is surjective, i.e., $H^2(F_S/F_m, A[\l^\infty])=0$ for any $m$
and the claim follows. \end{proof}
\end{comment}
By hypothesis $A^t(F_m)^*$ is finite, therefore $H^2(F_S/F_m, A[\l^\infty])$ is finite as well. From the cohomology
of the sequence
\[ A[\l] \iri A[\l^\infty] {\buildrel \l\over{-\!\!\!-\!\!\!\twoheadrightarrow}} A[\l^\infty] \]
(and the fact that $H^3(F_S/F_m, A[\l])=0$, because $cd_\l(\Gal(F_S/F_m))=2$), one finds
\[ H^2(F_S/F_m, A[\l^\infty]) {\buildrel \l\over{-\!\!\!-\!\!\!\twoheadrightarrow}} H^2(F_S/F_m, A[\l^\infty]) \ ,\]
i.e., $H^2(F_S/F_m, A[\l^\infty])$ is divisible. Being divisible and finite $H^2(F_S/F_m, A[\l^\infty])$ must be 0
for any $m$ and the claim follows.
\end{proof}

We can also prove the vanishing of $H^2(F_S/K, A[\l^\infty])$ for the extension $K=F(A[\l^\infty])$.

\begin{prop}\label{PropTriv}
If $K=F(A[\l^\infty])$, then $H^2(F_S/K, A[\l^\infty])=0$.
\end{prop}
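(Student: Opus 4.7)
My plan is to prove the stronger assertion that $H^2(F_S/L, A[\l^\infty])=0$ for every finite intermediate extension $F \subseteq L \subseteq K$; the proposition then follows at once by passing to the direct limit over $L$. Note that this approach does not really use the hypothesis $K=F(A[\l^\infty])$ and in fact establishes the vanishing for every Galois $K\subseteq F_S$; this is consistent with the observation that the hypothesis of Proposition \ref{PropH2Zero} has quite a different flavor.

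The central step is Poitou--Tate duality applied to the finite $G_S(L)$-module $M=A[\l^n]$, whose Cartier dual via the Weil pairing is $M^*\simeq A^t[\l^n]$. For any $v\in S$ the embedding $L\iri L_v$ induces an injection $A^t(L)[\l^n]\iri A^t(L_v)[\l^n]$, so
\[ \ts^0(F_S/L,A^t[\l^n]):=\ker\Bigl(H^0(F_S/L,A^t[\l^n])\to \bigoplus_{v\in S}H^0(L_v,A^t[\l^n])\Bigr)=0. \]
Poitou--Tate duality (valid for global function fields with coefficients of order coprime to $p=\mathrm{char}(F)$, see \cite[Ch.~XI]{NSW}) then yields $\ts^2(F_S/L,A[\l^n])=0$, i.e.\ an injection $H^2(F_S/L,A[\l^n])\iri \bigoplus_{v\in S} H^2(L_v,A[\l^n])$. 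Taking direct limits in $n$, since filtered colimits are exact and commute with finite direct sums, I would obtain
\[ H^2(F_S/L,A[\l^\infty])\iri \bigoplus_{v\in S}H^2(L_v,A[\l^\infty]). \]

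To conclude I would invoke the vanishing of each local factor, already recorded in the paper: by local Tate duality $H^2(L_v,A[\l^\infty])\simeq T_\l A^t(L_v)^\vee$, and $T_\l A^t(L_v)=0$ because the $\l$-primary torsion of $A^t(L_v)$ injects into the (finite) $\l$-primary torsion of the reduction (the residue characteristic being $p\neq\l$). Hence $H^2(F_S/L,A[\l^\infty])=0$ for every finite $L$, and the direct limit over $F\subseteq L\subseteq K$ gives $H^2(F_S/K, A[\l^\infty])=0$. The main obstacle will be the bookkeeping needed to ensure that the finite-level Poitou--Tate injection and the passage to $\l^\infty$-coefficients combine cleanly to produce the displayed injection for $A[\l^\infty]$; once this is verified, the remainder of the argument is purely formal.
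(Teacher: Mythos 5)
The central step of your proposal misapplies Poitou--Tate duality, and the conclusion you draw from it is in fact false. Poitou--Tate duality (\cite[Theorem~8.6.7]{NSW}) pairs $\ts^1(F_S/L,A^t[\l^n])$ with $\ts^2(F_S/L,A[\l^n])$, so the dual of $\ts^2(F_S/L,A[\l^n])$ is $\ts^1(F_S/L,A^t[\l^n])$, \emph{not} $\ts^0(F_S/L,A^t[\l^n])$. Your observation that $\ts^0(F_S/L,A^t[\l^n])=0$ (because $L\iri L_v$ makes $A^t(L)[\l^n]\iri A^t(L_v)[\l^n]$ injective) is correct but irrelevant: that vanishing expresses the Hasse principle in degree $0$ and gives no information about $\ts^2$. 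The group $\ts^1(F_S/L,A^t[\l^n])$ is a Tate--Shafarevich-type group and does not vanish automatically, so the claimed injection $H^2(F_S/L,A[\l^\infty])\iri\bigoplus_v H^2(L_v,A[\l^\infty])$ fails. Indeed, the stronger assertion you set out to prove --- that $H^2(F_S/L,A[\l^\infty])=0$ for \emph{every} finite $L$ --- is simply false: the Cassels--Poitou--Tate sequence \eqref{Cassels} identifies $H^2(F_S/F_m,A[\l^\infty])^\vee$ with the kernel of the localization map on the compact Selmer group $\il_n \ker(\psi^t_{m,n})$ of $A^t$ over $F_m$, and this kernel is nonzero whenever, for example, $A^t(F_m)$ has positive Mordell--Weil rank (the free part of $A^t(F_m)\otimes\Z_\l$ cannot inject into the finite group $\prod_{v\in S}A^t(F_{v_m})^*$). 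This is exactly why the paper treats the vanishing of $H^2(F_S/K,A[\l^\infty])$ as a hypothesis requiring justification, and gives Propositions~\ref{PropH2Zero} and~\ref{PropTriv} as separate sufficient criteria. Your own remark that the argument ``does not really use the hypothesis $K=F(A[\l^\infty])$'' should have been a warning sign: the paper's proof uses that hypothesis essentially, reducing via the trivial Galois action (and the Weil pairing) to $H^2(F_S/K,(\boldsymbol{\mu}_{\l^\infty})^{2g})$, then applying Poitou--Tate in the correct direction $\ts^2(\boldsymbol{\mu}_{\l^m})\simeq\ts^1(\Z/\l^m\Z)^\vee$ together with $\ts^1(F_S/F_n,\Z/\l^m\Z)\simeq\Hom(C\l_S(F_n),\Z/\l^m\Z)$ and the finiteness of the $S$-class groups.
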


\begin{proof}
$\Gal(F_S/K)$ has trivial action on $A[\l^\infty]$ and (by the Weil pairing) on $\boldsymbol{\mu}_{\l^\infty}$, so
\[ H^2(F_S/K, A[\l^\infty])\simeq H^2(F_S/K, (\Q_\l/\Z_\l)^{2g}) \simeq H^2(F_S/K, (\boldsymbol{\mu}_{\l^\infty})^{2g}) \ .\]
Let $F_n=F(A[\l^n])$, using the notations of Lemma \ref{DescrZ}, Poitou-Tate duality (\cite[Theorem 8.6.7]{NSW})
and the isomorphism $\ts^1(F_S/F_n, \Z/\l^m\Z)\simeq\Hom(C\l_S(F_n),\Z/\l^m\Z)$ (\cite[Lemma 8.6.3]{NSW}), one has
\begin{align*}
H^2(F_S/K, \boldsymbol{\mu}_{\l^\infty}) & \simeq  \ts^2(F_S/K, \boldsymbol{\mu}_{\l^\infty})
  \simeq \dl {n,m} \ts^2(F_S/F_n, \boldsymbol{\mu}_{\l^m})\\
 & \simeq \dl {n,m} \ts^1(F_S/F_n, \boldsymbol{\mu}'_{\l^m})^\vee
  \simeq \dl {n,m} \ts^1(F_S/F_n, \Z/\l^m\Z)^\vee\\
 & \simeq \dl {n,m} \Hom(C\l_S(F_n),\Z/\l^m\Z)^\vee
  \simeq \dl {n,m} C\l_S(F_n)/\l^m \\
 & \simeq \dl n C\l_S(F_n) \otimes_\Z \Q_\l/\Z_\l =0
\end{align*}
since $C\l_S(F_n)$ is finite.
\end{proof}

\begin{rem}
{\em The above proposition works in the same way for a general $\l$-adic Lie extensions, unramified outside $S$,
which contains the trivializing extension. }
\end{rem}

\begin{exe}\label{AbVarTrExt}
{\em Let $A$ be an abelian variety without complex multiplication: by Proposition \ref{PropTriv},
the extension $K=F(A[\l^\infty])$ realizes the hypothesis of Proposition \ref{dimcm2} when every bad 
reduction prime is of split multiplicative reduction (in order to have $\cd_\l(G_v)=2$) and $\l>2g+1$ 
(by \cite{ST} and the embedding $\Gal(K/F)\iri{\rm GL}_{2g}(\Z_\l)\,$). Therefore $Sel_{A}(K)_\l^\vee$ has no nontrivial 
pseudo-null submodule. When $A=\mathcal{E}$ is an elliptic curve (using Igusa's theorem, see, e.g., \cite{BLV})
one can prove that $\dim\Gal(K/F)=4$ and also the surjectivity of the map $\psi$ (which, in this case,
is not needed to prove the absence of pseudo-null submodules): more details can be found in \cite{S}.\\
The same problem over number fields cannot (in general) be addressed in the same way and one needs the surjectivity
of the map $\psi$. The topic is treated (for example) in \cite[Section 4.2]{Co}.}
\end{exe}

\vspace{.5truecm}

\noindent Andrea Bandini\\
Universit\`a degli Studi di Parma - Dipartimento di Matematica e Informatica\\
Parco Area delle Scienze, 53/A - 43124 Parma - Italy\\
e-mail: andrea.bandini@unipr.it\\

\noindent Maria Valentino\\
Universit\`a della Calabria - Dipartimento di Matematica e Informatica\\
via P. Bucci - Cubo 31B - 87036 Arcavacata di Rende (CS) - Italy\\
e-mail: valentino@mat.unical.it


\begin{thebibliography}{99}

\bibitem{BLV} {\sc A. Bandini, I. Longhi, S. Vigni} {\em Torsion points on elliptic curves over
function fields and a theorem of Igusa},
Expo. Math. {\bf 27}, no. 3 (2009), 175--209.

\bibitem{BV} {\sc A. Bandini, M. Valentino} {\em Control Theorems for $\ell$-adic Lie extensions of global function fields},
arXiv:1206.2767v2 [math.NT].

\bibitem{B} {\sc N. Bourbaki} {\em Commutative Algebra}, Paris: Hermann (1972).

\bibitem{Co} {\sc J. Coates} {\em Fragments of the $GL_2$ Iwasawa theory of elliptic curves without complex multiplication},
in {\em Arithmetic Theory of Elliptic Curves, (Cetraro, 1997)}, Ed. C. Viola, Lecture Notes in Mathematics 1716,
Springer (1999), 1--50.

\bibitem{CS} {\sc J. Coates, R. Sujatha} {\em Galois cohomology of elliptic curves. 2nd Edition}
Narosa (2010).

\bibitem{DdSMS} {\sc J.D. Dixon, M.P.F. du Sautoy, A. Mann, D. Segal} {\em Analytic pro-$p$ groups. 2nd Edition},
Cambridge Studies in Advanced Mathematics {\bf 61}, Cambridge Univ. Press  (1999).

%\bibitem{HS} {\sc P.J. Hilton, U. Stammbach} {\em A Course in Homological
%Algebra - Second edition}, GTM 4, Springer-Verlag, (1997).

\bibitem{J1} {\sc U. Jannsen} {\em A spectral sequence for Iwasawa adjoints},
unpublished notes available on http://www.uni-regensburg.de/Fakultaeten/nat\_Fak\_I/Jannsen/index.html

\bibitem{J} {\sc U. Jannsen} {\em Iwasawa modules up to isomorphism}, Advanced
Studies in Pure Mathematics 17 (1989), Algebraic Number Theory - in honor of K. Iwasawa,
171--207.

\bibitem{L} {\sc M. Lazard} {\em Groupes analytiques p-adiques},
Publ. Math. I.H.E.S. {\bf 26} (1965), 389--603.

\bibitem{Mi1} {\sc J.S. Milne} {\em Arithmetic Duality Theorems},
BookSurge, LLC, Second edition, 2006.

\bibitem{NSW} {\sc J. Neuchirch, A. Schmidt, K. Wingberg} {\em Cohomology of number fields - Second edition},
GTM 323, Springer-Verlag, (2008).

\bibitem{O} {\sc Y. Ochi} {\em A note on Selmer groups of abelian varieties over the trivializing extensions},
Proc. Amer. Math. Soc. {\bf 134} (2006), no. 1, 31--37 .

\bibitem{OV} {\sc Y. Ochi, O. Venjakob} {\em On the structure of Selmer groups over p -adic Lie extensions},
J. Algebraic Geom. 11 (2002), no. {\bf 3}, 547--580.

%\bibitem{OV1} {\sc Y. Ochi, O. Venjakob} {\em On the ranks of Iwasawa modules over $p$-adic Lie extensions},
%Math. Proc. Cambridge Philos. Soc. {\bf 135} (2003), no. 1, 25--43.

\bibitem{S} {\sc G. Sechi} {\em $GL_2$ Iwasawa Theory of Elliptic Curves over Global Function Fields},
PhD thesis, University of Cambridge, (2006).

\bibitem{Se2} {\sc J.P. Serre} {\em Sur la dimension cohomologique des groupes profinis},
Topology {\bf 3} (1965), 413--420.

\bibitem{ST} {\sc J.P. Serre, J. Tate} {\em Good reduction of abelian varieties},
Ann. of Math. {\bf 88} (1968), 492--517.

\bibitem{Vj} {\sc O. Venjakob} {\em On the structure theory of the Iwasawa algebra of a $p$-adic Lie group},
J. Eur. Math. Soc. (JEMS) {\bf 4}, no. 3 (2002), 271--311.

\end{thebibliography}
\end{document}